\documentclass[10pt]{amsart}
\pdfoutput=1
\usepackage{latexsym}
\usepackage{amssymb}
\usepackage{amscd}
\usepackage{epsf}
\usepackage{amsmath,amssymb,graphicx,amsthm,mathrsfs,verbatim}
\usepackage{pstricks}
\usepackage[colorlinks=true,pagebackref]{hyperref}

\begin{document}

%Style Declaration
\theoremstyle{plain}
\newtheorem{Thm}{Theorem}[section]
\newtheorem{TitleThm}[Thm]{}
\newtheorem{Corollary}[Thm]{Corollary}
\newtheorem{Proposition}[Thm]{Proposition}
\newtheorem{Lemma}[Thm]{Lemma}
\newtheorem{Conjecture}[Thm]{Conjecture}
\theoremstyle{definition}
\newtheorem{Definition}[Thm]{Definition}
\theoremstyle{definition}
\newtheorem{Example}[Thm]{Example}
\newtheorem{TitleExample}[Thm]{}
\newtheorem{Remark}[Thm]{Remark}
\newtheorem{SimpRemark}{Remark}
\renewcommand{\theSimpRemark}{}

%renewcommand{\theequation}{\thesection.\arabic{equation}}
\numberwithin{equation}{section}

\newcommand{\C}{{\mathbb C}}
\newcommand{\R}{{\mathbb R}}
\newcommand{\Z}{{\mathbb Z}}

\newcommand{\flushpar}{\par \noindent}

\newcommand{\proj}{{\rm proj}}
\newcommand{\coker}{{\rm coker}\,}
\newcommand{\rank}{{\rm rank}\,}
\newcommand{\supp}{{\rm supp}\,}
\newcommand{\codim}{{\operatorname{codim}}}
\newcommand{\sing}{{\operatorname{sing}}}
\newcommand{\Tor}{{\operatorname{Tor}}}
\newcommand{\wt}{{\operatorname{wt}}}
\newcommand{\dlog}{{\operatorname{Derlog}}}
\newcommand{\Olog}[2]{\Omega^{#1}(\text{log}#2)}
\newcommand{\produnion}{\cup \negmedspace \negmedspace 
\negmedspace\negmedspace {\scriptstyle \times}}
\newcommand{\pd}[2]{\dfrac{\partial#1}{\partial#2}}

\def \ba {\mathbf {a}}
\def \bb {\mathbf {b}}
\def \bc {\mathbf {c}}
\def \bone {\boldsymbol {1}}
\def \bh {\mathbf {h}}
\def \bk {\mathbf {k}}
\def \bm {\mathbf {m}}
\def \bn {\mathbf {n}}
\def \bt {\mathbf {t}}
\def \bu {\mathbf {u}}
\def \bv {\mathbf {v}}
\def \bx {\mathbf {x}}
\def \bw {\mathbf {w}}
\def \b1 {\mathbf {1}}
\def \bga {\boldsymbol \alpha}
\def \bgb {\boldsymbol \beta}
\def \bgg {\boldsymbol \gamma}
\def \bgw {\boldsymbol \omega}

\def \itc {\text{\it c}}
\def \ith {\text{\it h}}
\def \iti {\text{\it i}}
\def \itj {\text{\it j}}
\def \itm {\text{\it m}}
\def \itM {\text{\it M}} 
\def \itn {\text{\it n}}
\def \ithn {\text{\it hn}}
\def \itt {\text{\it t}}

\def \cA {\mathcal{A}}
\def \cB {\mathcal{B}}
\def \cC {\mathcal{C}}
\def \cD {\mathcal{D}}
\def \cE {\mathcal{E}}
\def \cF {\mathcal{F}}
\def \cG {\mathcal{G}}
\def \cH {\mathcal{H}}
\def \cK {\mathcal{K}}
\def \cL {\mathcal{L}}
\def \cM {\mathcal{M}}
\def \cN {\mathcal{N}}
\def \cO {\mathcal{O}}
\def \cP {\mathcal{P}}
\def \cS {\mathcal{S}}
\def \cT {\mathcal{T}}
\def \cU {\mathcal{U}}
\def \cV {\mathcal{V}}
\def \cW {\mathcal{W}}
\def \cX {\mathcal{X}}
\def \cY {\mathcal{Y}}
\def \cZ {\mathcal{Z}}

\def \ga {\alpha}
\def \gb {\beta}
\def \gg {\gamma}
\def \gd {\delta}
\def \ge {\epsilon}
\def \gevar {\varepsilon}
\def \gk {\kappa}
\def \gl {\lambda}
\def \gs {\sigma}
\def \gt {\tau}
\def \gw {\omega}
\def \gz {\zeta}
\def \gG {\Gamma}
\def \gD {\Delta}
\def \gL {\Lambda}
\def \gS {\Sigma}
\def \gW {\Omega}

\def \dim {{\rm dim}\,}
\def \mod {{\rm mod}\;}

% These are Brian's
\newcommand{\ds}{\displaystyle}
\newcommand{\vf}{\vspace{\fill}}
\newcommand{\vect}[1]{{\bf{#1}}}
\def\R{\mathbb R}
\def\C{\mathbb C}
\def\N{\mathbb N}
\def\Sym{\mathrm{Sym}}
\def\Sk{\mathrm{Sk}}
\def\GL{\mathrm{GL}}
\def\Diff{\mathrm{Diff}}
\def\id{\mathrm{id}}
\def\Pf{\mathrm{Pf}}
\def\sll{\mathfrak{sl}}
\def\g{\mathfrak{g}}
\def\h{\mathfrak{h}}
\def\k{\mathfrak{k}}
\def\t{\mathfrak{t}}
\def\OcN{\mathscr{O}_{\C^N}}
\def\Ocn{\mathscr{O}_{\C^n}}
\def\Ocm{\mathscr{O}_{\C^m}}
\def\Ocnz{\mathscr{O}_{\C^n,0}}
\def\E{\mathscr{E}}
\def\dimc{{\dim_{\C}}}
\def\dimC{{\dim_{\C}}}
% TODO: replace with operator
\def\Derlog{\mathrm{Derlog}\,}
\def\expdeg{\mathrm{exp\,deg}\,}

\title[Solvable Groups and Free Divisors]
{Solvable Group Representations and Free Divisors whose Complements are 
$K(\pi, 1)$\lq s}
\author[James Damon and Brian Pike ]{James Damon$^1$ and Brian Pike} 
\thanks{(1) Partially supported by the National Science Foundation grant 
DMS-0706941}
\address{Department of Mathematics \\
University of North Carolina \\
Chapel Hill, NC 27599-3250  \\
USA}

\keywords{solvable linear algebraic groups,  block representations, 
exceptional orbit varieties, linear free divisors, (modified) Cholesky-type 
factorizations, Eilenberg-Mac Lane spaces, relative invariants, cohomology of 
complements, cohomology of Milnor fibers}

\subjclass[2010]{Primary: 22E25, 55P20, 55R80
Secondary:  20G05, 32S30}

\begin{abstract}
We apply previous results on the representations of solvable linear algebraic 
groups to construct a new class of free divisors whose complements are 
$K(\pi, 1)$\rq s.  These free divisors arise as the exceptional orbit varieties 
for a special class of \lq\lq block representations\rq\rq and have the 
structure of determinantal arrangements. \par  
Among these are the free divisors defined by conditions for the (modified) 
Cholesky-type factorizations of matrices, which contain the determinantal 
varieties of singular matrices of various types as components.  These 
complements are proven to be homotopy tori, as are the Milnor fibers of 
these free divisors.  The generators for the complex cohomology of each are 
given in terms of forms defined using the basic relative invariants of the 
group representation.
\end{abstract}

\maketitle
\vskip-.5\baselineskip
\centerline{{\it Department of Mathematics}} 
\centerline{{\it  University of North Carolina at Chapel Hill,}} 
\centerline{{\it Chapel Hill, NC 27599-3250, USA}}

\section*{Introduction}  
\label{S:sec0} 
A classical result of Arnold  and Brieskorn \cite{Bk}, \cite{Bk2} states that 
the complement of the discriminant of the versal unfolding of a simple 
hypersurface singularity is a $K(\pi, 1)$.  Deligne \cite{Dg} showed this 
result could be placed in the general framework by proving that the 
complement of an arrangement of reflecting hyperplanes for a Coxeter 
group is again a $K(\pi, 1)$ (and more generally for simplicial arrangements).  
A discriminant complement for a simple hypersurface singularity can be 
obtained as the quotient of the complement of such a hyperplane 
arrangement by the free action of a finite group, and hence is again is a 
$K(\pi, 1)$.  \par 
What the discriminants and Coxeter hyperplane arrangements have in 
common is that they are free divisors.  This notion was introduced by Saito 
\cite{Sa}, motivated by his discovery that the discriminants for the versal 
unfoldings of isolated hypersurface singularities are always free divisors.  By 
contrast, Kn\"{o}rrer \cite{Ko} found an isolated complete intersection 
singularity, for which the complement of the discriminant of the versal 
unfolding is not a $K(\pi, 1)$ (although it is again a free divisor by a result of 
Looijenga \cite{L}).  \par
This leads to an intriguing question about when a free divisor has a 
complement which is a $K(\pi, 1)$.  This remains unsettled for the 
discriminants of versal unfoldings of isolated hypersurface singularities; this 
is the classical \lq\lq $K(\pi, 1)$-Problem\rq\rq.  Also, for hyperplane 
arrangements, there are other families such as arrangements arising from 
Shephard groups which by Orlik-Solomon \cite{OS} satisfy both properties; 
however, it remains open whether the conjecture of Saito is true that every 
free arrangement has complement which is a $K(\pi, 1)$.  A survey of these 
results on arrangements can be found in the book of Orlik-Terao \cite{OT}.  
Except for isolated curve singularities in $\C^2$ (and the total space for 
their equisingular deformations), there are no other known examples of free 
divisors whose complements are $K(\pi, 1)$\rq s.  While neither $K(\pi, 1)$-
problem has been settled, numerous other classes of free divisors have been 
discovered so this question continues to arise in new contexts.  \par
In this paper, we define a large class of free divisors whose complements are 
$K(\pi, 1)$\rq s by using the results obtained in \cite{DP1}.  These free 
divisors are \lq\lq determinantal arrangements\rq\rq, which are analogous 
to hyperplane arrangements except that we replace a configuration of 
hyperplanes by a configuration of determinantal varieties (and the defining 
equation is a product of determinants rather than a product of linear 
factors).  \par
These varieties arise as the \lq\lq exceptional orbit varieties\rq\rq\, for 
representations of solvable linear algebraic groups which are \lq\lq Block 
Representations\rq\rq\, in the sense of \cite{DP1}; and in Theorem 
\ref{BlkKpione} we show that their complements are always $K(\pi, 1)$\rq s, 
where $\pi$ is the extension of a finitely generated free abelian group by a 
finite group.  More generally we show that for a weaker notion of \lq\lq  
nonreduced Block Representation\rq\rq, the exceptional orbit varieties are 
weaker free* divisors; however their complements are still $K(\pi, 1)$\rq s.   
From this we deduce in Theorem \ref{MilFibKpione} that the Milnor fibers of 
these exceptional orbit varieties 
are again $K(\pi, 1)$\rq s. \par
We exhibit in Theorem \ref{CholFacExam} a number of families of such \lq\lq 
determinantal arrangements\rq\rq in spaces of symmetric, 
skew-symmetric and general square matrices and 
$(m-1) \times m$--matrices which are free divisors (or free* divisors) with 
complements $K(\pi, 1)$\rq s.  We note that the individual determinant 
varieties in these spaces are neither free divisors nor are their complements 
$K(\pi, 1)$\rq s.  However, the determinant variety can be placed in a larger 
geometric configuration of determinantal varieties which together form a 
free divisor whose complement is a $K(\pi, 1)$.  \par
For these results we use special representations of solvable algebraic groups 
involved in various forms of Cholesky--type factorizations or modified 
Cholesky--type factorizations for symmetric, skew-symmetric and general 
square complex matrices, and $m \times (m+1)$ general matrices.  We 
describe these factorizations in \S \ref{S:sec1}.  We go on to specifically 
show in these cases $\pi \simeq \Z^k$ where $k$ is the rank of the 
corresponding solvable groups (Theorem \ref{CholFacExam}), so the 
complements are homotopy equivalent to 
$k$--tori.  We further deduce that the Milnor fibers for these cases are 
homotopy equivalent to $(k-1)$--tori.  Furthermore, in Theorem 
\ref{CohomComplFibKZm} and Corollary \ref{CorCholCohGM}, we are able to 
find explicit generators for the complex cohomology of the complement and 
of the Minor fibers using forms obtained from the basic relative invariants 
for the group actions, using results from the theory of prehomogeneous 
spaces due to Sato-Kimura \cite{SK}.  We deduce that the Gauss-Manin 
systems for these determinantal arrangements are trivial.  The simple form 
of these results contrasts with the more difficult situation of linear free 
divisors for reductive groups considered by Granger, Mond et al \cite{GMNS} 
\par 
These determinantal arrangements are also used in \cite{DP2} for 
determining the vanishing topology of more general matrix singularities 
based on the various determinantal varieties. \par
The authors are especially grateful to Shrawan Kumar for his comments and 
references.

\section{Cholesky Factorizations, Modified Cholesky Factorizations, and 
Solvable Group Representations}  
\label{S:sec1} 

In this section we begin by explaining the interest in determinantal 
arrangements which arises from various forms of Cholesky factorization. 
 Traditionally, it is well--known that certain matrices can be put in normal 
forms after multiplication by appropriate matrices.  The basic example is for 
symmetric matrices, where a nonsingular symmetric matrix $A$ can be 
diagonalized by composing it with an appropriate invertible matrix $B$ to 
obtain $B\cdot A \cdot B^T$.  The choice of $B$ is highly nonunique.  For 
real matrices, Cholesky factorization gives a unique choice for $B$ provided 
$A$ satisfies certain determinantal conditions.  \par 
 More generally, by \lq\lq Cholesky factorization\rq\rq\, we mean a general 
collection of results for factoring real matrices into products of upper and 
lower triangular matrices.  These factorizations are used to simplify the 
solution of certain problems in applied linear algebra.  \par
We recall the three fundamental cases (see \cite{Dm} and \cite{BBW}).  For 
them, we let $A = (a_{i j})$ denote an $m \times m$ real matrix which may 
be symmetric, general, or skew-symmetric.  We let $A^{(k)}$ denote the $k 
\times k$ upper left-hand corner submatrix. \par

\begin{Thm}[Forms of Cholesky--Type Factorization] \hfill
\label{CholFac} 
\begin{enumerate}
\item  {\em Classical Cholesky factorization: } If $A$ is a 
positive--definite symmetric matrix with $\det (A^{(k)}) \neq 0$  for $k = 1, 
\dots, m$, then there exists a unique lower triangular matrix with positive 
diagonal entries $B$ so that $A = B \cdot B^T$.
\item {\em Classical LU decomposition: } If $A$ is a general matrix with 
$\det (A^{(k)}) \neq 0$  for $k = 1, \dots, m$, then there exists a unique 
lower triangular matrix $B$ and upper triangular matrix $C$ with diagonal 
entries $= 1$ so that $A = B \cdot C$. 
\item  {\em Skew-symmetric Cholesky factorization (see e.g. \cite{BBW}): } 
If $A$ is a skew-symmetric matrix for $m  = 2\ell$ with $\det (A^{(2k)}) 
\neq 0$  for $k = 1, \dots, \ell$, then there exists a unique lower block 
triangular matrix  $B$ with $2 \times 2$--diagonal blocks of the form a) in 
(\ref{Eqn1.1}) so that $A = B \cdot J \cdot B^T$, for $J$ the block diagonal 
$2\ell \times 2\ell$ skew-symmetric matrix with 
$2 \times 2$--diagonal blocks of the form b) in (\ref{Eqn1.1}).  For $m = 
2\ell + 1$, then there is again a unique factorization except now $B$ has an 
additional entry of $1$ in the last diagonal position, and $J$ is replaced by 
$J^{\prime}$ which has $J$ as the upper left corner $2\ell \times 2\ell$ 
submatrix, with remaining entries $ = 0$.  
\begin{equation}
\label{Eqn1.1}
a) \qquad \begin{pmatrix}
r &0 \\ 0  &  \pm r 
\end{pmatrix}  \quad , r > 0\qquad \makebox{ and } \quad b) \qquad  \,\, 
\begin{pmatrix}
0 & -1 \\ 1  &  0 
\end{pmatrix}
\end{equation}
\end{enumerate}
\end{Thm}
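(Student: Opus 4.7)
The plan is to establish (2) first, deduce (1) from it by a symmetric specialization, and then handle (3) separately by a block-analogue of the same inductive argument.

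Part (2) follows by induction on $m$. Writing
\[
A = \begin{pmatrix} A^{(m-1)} & \bu \\ \bv^T & a_{mm} \end{pmatrix},\quad B = \begin{pmatrix} B' & 0 \\ \bw^T & b_{mm} \end{pmatrix},\quad C = \begin{pmatrix} C' & \bx \\ 0 & 1 \end{pmatrix},
\]
the inductive hypothesis supplies unique $B',C'$ with $B'C' = A^{(m-1)}$. Matching off-diagonal blocks gives triangular systems $B'\bx = \bu$ and $\bw^T C' = \bv^T$, uniquely solvable because $\det(A^{(k)}) \neq 0$ for $k < m$ forces nonzero diagonals on $B'$ and $C'$; then $b_{mm} = a_{mm} - \bw^T \bx$ is forced. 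The identity $\det(A) = b_{11}\cdots b_{mm}$ shows that the hypothesis $\det(A^{(m)}) \neq 0$ is exactly what is needed to continue the induction one step further.

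For (1), I would recast the decomposition of (2) as $A = LDU$ with $L,U$ unit triangular and $D$ the diagonal matrix of entries $d_k = \det(A^{(k)})/\det(A^{(k-1)})$. Taking transposes and invoking uniqueness in (2) forces $U = L^T$, so $A = L D L^T$. When $A$ is positive-definite, evaluating $\bv^T A \bv$ on the columns of $L^{-T}$ (equivalently Sylvester's criterion) gives $d_k > 0$, so $D^{1/2}$ exists with positive diagonal entries, and $B := L D^{1/2}$ is the unique lower triangular matrix with positive diagonal satisfying $A = B B^T$.

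For (3), I would run a $2 \times 2$-block induction. The condition $\det(A^{(2)}) \neq 0$ produces a unique $2 \times 2$ lower triangular block $B^{(2)}$ of form (a) with $B^{(2)} J (B^{(2)})^T = A^{(2)}$, where the sign $\pm$ in (a) is forced by the sign of $\det(A^{(2)})$. Conjugating $A$ by a block-lower-triangular matrix that kills the block beneath $A^{(2)}$ yields a skew-symmetric Schur complement of size $m - 2$ whose even principal minors agree with those of $A$ up to the nonzero factor $\det(A^{(2)})$, so the induction applies. The odd case $m = 2\ell + 1$ is handled by treating the final $1 \times 1$ diagonal block as the prescribed entry $1$ in $B$, with $J$ extended to $J'$.

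The main obstacle is the careful execution of the $2 \times 2$ step in (3): verifying that among blocks of form (a) there is exactly one solving $B^{(2)} J (B^{(2)})^T = A^{(2)}$ (with the sign convention encoded by $\det(A^{(2)})$), and propagating the determinantal nonvanishing through the Schur complement. Once these two points are checked, the induction mirrors the structure of (2) and the remainder is bookkeeping.
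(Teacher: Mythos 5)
The paper itself gives no proof of Theorem~\ref{CholFac}: it is recalled as a known result with references to \cite{Dm} and \cite{BBW}, so your argument can only be judged on its own terms. On those terms the overall architecture is the standard and correct one: block induction for the LU decomposition in (2), the $LDL^{T}$ reduction plus Sylvester's criterion for (1) (where the transpose-and-uniqueness step should be phrased as $A=(C^{T}D)(L^{T})$ being a second factorization of the shape in (2), which forces $L^{T}=C$), and a $2\times 2$ block elimination with skew-symmetric Schur complements for (3). The propagation of the hypotheses through the Schur complement via $\det\bigl((A/A^{(2)})^{(2k)}\bigr)=\det(A^{(2k+2)})/\det(A^{(2)})$ is also right.

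There is, however, a concrete error in exactly the step you single out as the crux of (3). For a real skew-symmetric $A$ one has $\det(A^{(2)})=a_{12}^{2}>0$ whenever it is nonzero, so the sign of $\det(A^{(2)})$ carries no information and cannot determine the $\pm$ in block a) of (\ref{Eqn1.1}). Writing $B^{(2)}=\left(\begin{smallmatrix} r & 0\\ 0 & \epsilon r\end{smallmatrix}\right)$ with $r>0$, $\epsilon=\pm 1$, a direct computation gives $B^{(2)}J_{2}(B^{(2)})^{T}=\left(\begin{smallmatrix} 0 & -\epsilon r^{2}\\ \epsilon r^{2} & 0\end{smallmatrix}\right)$, so the equation $B^{(2)}J_{2}(B^{(2)})^{T}=A^{(2)}$ forces $\epsilon r^{2}=-a_{12}$, i.e.\ $\epsilon=-\operatorname{sign}(a_{12})$ and $r=\sqrt{|a_{12}|}$. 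It is the sign of the Pfaffian $\Pf(A^{(2)})=a_{12}$ (and, at later stages, of the successive Pfaffian ratios of the Schur complements) that selects the block, not the determinant; with that correction the $2\times 2$ step does have a unique solution and your induction goes through. The odd case also deserves one more sentence: the last row of $B$ below the diagonal is still uniquely determined even though $J'$ has a zero last row and column, because the equations for $a_{m,j}$, $j<m$, involve only the first $2\ell$ entries of that row paired against the invertible matrix $J$.
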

\par
We note that in each case the polynomial in the entries of $A$, $\prod \det 
(A^{(k)})$ over $1 \leq k \leq m$ (with $k$ even in the skew-symmetric 
case), defines a real variety off which there is the appropriate Cholesky 
factorization defined.  This real variety is defined on a space of real 
matrices and can be viewed as a real determinantal arrangement formed 
from the real varieties defined by the individual $\det (A^{(k)})$.  We turn to 
the corresponding complex situation and identify such varieties as examples 
of determinantal varieties which arise as \lq\lq exceptional orbit 
varieties\rq\rq of solvable group actions.  This perspective leads to a more 
general understanding of the determinantal varieties associated to Cholesky 
factorization.  \par

\subsection*{Cholesky Factorizations and Determinantal Arrangements}
We begin with the notion of determinantal varieties and determinantal 
arrangements on a complex vector space $V$.  
\begin{Definition}
A variety $\cV \subset V$ is a {\it determinantal variety} if $\cV$ has a 
defining equation $p = \det(B)$ where $B = (b_{i, j})$ is a square matrix 
whose entries are linear functions on $V$.  Then, $X \subset V$ is a {\it 
determinantal arrangement} if $X$ has defining equation $p = \prod p_i$ 
where each $p_i$ is a defining equation for a determinantal variety $\cV_i$. 
Then, $X = \cup_i \cV_i$
\end{Definition}
In the simplest case where the determinants are $1 \times 1$ determinants, 
then we obtain a central hyperplane arrangement. \par
\begin{Remark}
In the definitions of determinantal variety and determinantal arrangements 
we do not require that the defining equations be reduced.  In fact, even for 
certain of the Cholesky--type factorizations above this need not be true.
\end{Remark}
 \par
We now consider the spaces of $m \times m$ complex matrices which will 
either be symmetric, general, or skew-symmetric (with $m$ even).  In the 
complex case there are the following analogues of Cholesky factorization 
(see \cite{DP1} and \cite{P}).
\begin{Thm}[Complex Cholesky--Type Factorization] \hfill
\label{ComplCholFac} 
\begin{enumerate}
\item  {\em Complex Cholesky factorization: } If $A$ is a 
complex symmetric matrix with $\det (A^{(k)}) \neq 0$  for $k = 1, \dots, 
m$, then there exists a lower triangular matrix $B$, which is unique up to 
multiplication by a diagonal matrix with diagonal entries $\pm 1$, so that $A 
= B \cdot B^T$. 
\item {\em Complex LU decomposition: } If $A$ is a general complex matrix 
with $\det (A^{(k)}) \neq 0$  for $k = 1, \dots, m$, then there exists a 
unique lower triangular matrix $B$ and a unique upper triangular matrix $C$ 
which has diagonal entries $= 1$ so that $A = B \cdot C$. 
\item  {\em Complex Skew-symmetric Cholesky factorization : } If $A$ is a 
skew-symmetric matrix for $m  = 2\ell$ with $\det (A^{(2k)}) \neq 0$  for 
$k = 1, \dots, \ell$, then there exists a lower block triangular matrix $B$ 
having the same form as in (3) of Theorem \ref{CholFac} but with complex 
entries of same signs in each $2 \times 2$ diagonal block a) of 
(\ref{Eqn1.1}) (i.e. $ = r\cdot I$), so that $A = B \cdot J \cdot B^T$, for $J$ 
the $2\ell \times 2\ell$ skew-symmetric matrix as in (3) of Theorem 
\ref{CholFac}.  Then, $B$ is unique up to multiplication by block diagonal 
matrices with a $2 \times 2$ diagonal blocks $= \pm I$.  There is also a 
factorization for the case $m  = 2\ell + 1$ analogous to that in (3) in 
Theorem \ref{CholFac}, with again with complex entries of same signs in 
each $2 \times 2$ diagonal block. 
\end{enumerate} 
\end{Thm}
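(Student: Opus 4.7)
The plan is to prove all three parts by induction on the matrix size---stepping by one for (1) and (2) and by two for (3)---using a block decomposition of $A$ to reduce to a smaller instance of the same factorization problem. At each inductive step, existence amounts to solving a linear system and, where needed, extracting a complex square root, while non-uniqueness tracks the scalar freedom introduced at that stage.

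For part (1), write $A = \begin{pmatrix} A^{(m-1)} & c \\ c^T & a_{mm} \end{pmatrix}$ and seek $B = \begin{pmatrix} B' & 0 \\ d^T & \beta \end{pmatrix}$. Expanding $A = BB^T$ yields three conditions: $A^{(m-1)} = B'(B')^T$ (handled by the induction hypothesis, with $B'$ unique up to multiplication by a diagonal $\pm 1$ matrix); $c = B'd$, which uniquely determines $d$ since $\det(B')^2 = \det A^{(m-1)} \neq 0$; and $\beta^2 = a_{mm} - d^T d$, which is nonzero because $\det A^{(m)} = \det(B)^2$ and so determines $\beta$ up to sign. The accumulated freedom is exactly a sign on each diagonal entry of $B$. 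Part (2) is parallel: use the dual ansatz $C = \begin{pmatrix} 1 & y^T \\ 0 & C' \end{pmatrix}$, reduce to the Schur complement equation $B'C' = A' - vu^T/a_{11}$ whose leading minors equal $\det A^{(k)}/a_{11}$, and invoke induction. Strict uniqueness then follows from the standard observation that a matrix simultaneously lower triangular and upper triangular with unit diagonal must be the identity.

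For part (3), decompose $A = \begin{pmatrix} A^{(2)} & C \\ -C^T & A' \end{pmatrix}$ and $B = \begin{pmatrix} rI & 0 \\ D & B' \end{pmatrix}$. Matching $A = BJB^T$ with $J_0 = \begin{pmatrix} 0 & -1 \\ 1 & 0 \end{pmatrix}$ the building block of $J$ produces three equations: $A^{(2)} = r^2 J_0$, solvable for $r$ up to sign since $\det A^{(2)} \neq 0$; $C = rJ_0 D^T$, uniquely fixing $D$; and a reduced skew-symmetric equation $B'J'(B')^T = A' - DJ_0 D^T$ of size $2(\ell-1)$. The crucial step is propagating the leading-minor hypothesis: by the Pfaffian Schur complement identity $\Pf(A^{(2k)}) = \Pf(A^{(2)}) \cdot \Pf((A' - DJ_0 D^T)^{(2k-2)})$, combined with $\det = \Pf^2$ in the skew-symmetric case, so the nonvanishing of $\det A^{(2k)}$ passes to leading minors of the reduced matrix. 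The sign freedom in $r$ at each stage yields precisely the asserted $\pm I$ block-diagonal non-uniqueness. The odd case $m = 2\ell+1$ is handled by augmenting with one extra row and column, with the only available diagonal choice being $1$.

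The principal obstacle I expect is in the skew-symmetric argument: one must verify the Pfaffian Schur complement identity in the form needed, and recognize that the restricted ansatz $B_1 = rI$ (rather than a general $2\times 2$ lower triangular block) is the correct normal form to impose, absorbing the otherwise extraneous diagonal ambiguity of the factorization into a single complex scalar whose sign alone remains undetermined. Apart from these skew-symmetric subtleties, everything is the direct complexification of the classical real argument, with complex square roots replacing positive square roots and $\pm 1$ ambiguities replacing the absolute uniqueness enjoyed in the real case.
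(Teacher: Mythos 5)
Your proposal is essentially correct, but it is worth noting that the paper itself offers no proof of Theorem \ref{ComplCholFac}: the statement is quoted from \cite{DP1} and \cite{P}, where it is obtained as a byproduct of the block-representation machinery of \S \ref{S:sec2}. In that framework one shows that the relevant solvable group ($B_m$ on $Sym_m$, $B_m\times N_m$ on $M_{m,m}$, $D_m$ on $Sk_m$) acts equidimensionally with relative coefficient determinants $\det(A^{(k)})$, so the open orbit is exactly the complement of $\prod\det(A^{(k)})=0$; existence of the factorization is then the statement that $I$ (resp.\ $J$) lies in that orbit, and the uniqueness ambiguity is precisely the finite isotropy group ($(\Z/2\Z)^m$, trivial, $(\Z/2\Z)^{\ell}$) that reappears later in the proof of Theorem \ref{CholFacExam}. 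Your route is a direct linear-algebra induction, which is more elementary and self-contained; the paper's route buys the freeness of the exceptional orbit variety and the topology of the complement at the same time, at the cost of the general theory.

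Your induction itself goes through. Two small points to tighten: in part (1) the claim that $\beta^2=a_{mm}-d^Td\neq 0$ should not be justified by ``$\det A^{(m)}=\det(B)^2$'' (that presupposes the factorization); instead observe that $d^Td=c^T\bigl(A^{(m-1)}\bigr)^{-1}c$, so $a_{mm}-d^Td$ is the scalar Schur complement and equals $\det A^{(m)}/\det A^{(m-1)}\neq 0$. In part (3) you do not actually need the Pfaffian identity: since the hypothesis and the inductive claim only concern nonvanishing of $\det A^{(2k)}$, the ordinary determinantal Schur complement formula
\begin{equation*}
\det A^{(2k)} \;=\; \det A^{(2)}\cdot \det\Bigl(\bigl(A'+C^T(A^{(2)})^{-1}C\bigr)^{(2k-2)}\Bigr),
\end{equation*}
together with the computation $A'-DJ_0D^T=A'+C^T(A^{(2)})^{-1}C$, already propagates the hypothesis; the Pfaffian version is true but is an unnecessary refinement. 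With those adjustments your argument is complete, including the bookkeeping showing the residual freedom is exactly right multiplication by $\mathrm{diag}(\pm1)$ in case (1) and by block-diagonal $\pm I$ in case (3), matching the isotropy groups used in the proof of Theorem \ref{CholFacExam}.
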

\par
The polynomials $\prod \det (A^{(k)})$ over $1 \leq k \leq m$ (with $k$ even 
in the skew-symmetric case), define varieties which are determinantal 
arrangements.  However, these varieties have differing properties when 
viewed from the perspective of their being free divisors.  While they are free 
divisors in the symmetric case, they are a weaker form of free* divisor (see 
\cite{D}) for the general and skew-symmetric cases.  The stronger 
properties of free divisors discovered in \cite{DM} led to a search for a 
modification of the notion of Cholesky factorization for general $m \times 
m$ matrices.  This further extends to the space of $(m-1) \times m$ 
general matrices.  In each case there is a modified form of Cholesky--type 
factorization (see \cite{DP1} and \cite{P}) which we consider next.  \par
For an $m \times m$ matrix $A$, we let $\hat A$ denote the $m \times (m-
1)$ matrix obtained by deleting the first column of $A$.  If instead $A$ is an 
$(m-1) \times m$ matrix, we let $\hat A$ denote the $(m-1) \times (m-1)$ 
matrix obtained by deleting the first column of $A$.  In either case, we let 
$\hat A^{(k)}$ denote the $k \times k$  upper left submatrix of $\hat A$, 
for $1 \leq k \leq m-1$.  Then a modified form of Cholesky factorization is 
given by the following (see\cite{DP1} and \cite{P}).

\begin{Thm}[Modified Cholesky--Type Factorization] \hfill
\label{ModCholFac} 
\begin{enumerate}
\item {\em Modified LU decomposition: } If $A$ is a general complex $m 
\times m$ matrix with $\det (A^{(k)}) \neq 0$  for $k = 1, \dots, m$ and 
$\det (\hat A^{(k)}) \neq 0$  for $k = 1, \dots, m-1$, then there exists a 
unique lower triangular matrix $B$ and a unique  upper triangular matrix $C$, 
which has first diagonal entry $= 1$, and remaining first row entries $= 0$ 
so that $A = B \cdot K \cdot C$, where $K$ has the form of a) in 
(\ref{Eqn1.2}).
\item  {\em Modified Cholesky factorization for $(m-1) \times m$ matrices: 
} If $A$ is an $(m-1) \times m$ complex matrix with $\det (A^{(k)}) \neq 0$  
for $k = 1, \dots, m - 1$, $\det (\hat A^{(k)}) \neq 0$  for $k = 1, \dots, m-
1$, then there exists a unique $(m-1) \times (m-1)$ lower triangular matrix 
$B$ and a unique $m \times m$ matrix $C$ having the same form as in (1), 
so that $A = B \cdot K^{\prime} \cdot C$, where $K^{\prime}$ has the form 
of b) in (\ref{Eqn1.2}).  
\end{enumerate}
\begin{equation}
\label{Eqn1.2}
a) \qquad \begin{pmatrix}
1 &1 & 0 & 0 & 0 \\ 0  & 1 & 1 & 0 & 0 \\  0  & 0 & \ddots & 1 & 0 \\
0  & 0 & 0 & 1 & 1 \\  0  & 0  & 0  & 0 & 1
\end{pmatrix}  \qquad \makebox{ and } \quad b) \qquad    \begin{pmatrix}
1 &1 & 0 & 0 & 0 & 0 \\ 0  & 1 & 1 & 0 & 0 & 0 \\  0  & 0 & \ddots & 1 & 0 
& 0 \\
0  & 0 & 0 & 1 & 1 & 0 \\  0  & 0  & 0 & 0  & 1 & 1
\end{pmatrix}
\end{equation}
\end{Thm}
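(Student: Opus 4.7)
My plan is to reduce both modified decompositions to the classical LU decomposition of Theorem~\ref{ComplCholFac}(2), absorbing a diagonal factor to accommodate the fixed matrix~$K$.

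For part~(1), apply classical LU to $A$: under $\det A^{(k)}\neq 0$ for $k=1,\ldots,m$, there is a unique factorization $A=\tilde B U$ with $\tilde B$ lower triangular and $U$ upper unitriangular. I introduce an unknown diagonal matrix $D$ and set
\[
B=\tilde B D^{-1},\qquad C=K^{-1}DU,
\]
so that $BKC=\tilde B U=A$ automatically, $B$ is lower triangular, and $C$ is upper triangular. The only remaining constraint is that the first row of $C$ equal $\mathbf{e}_1^T=(1,0,\ldots,0)$. Since the first row of $K^{-1}$ is the alternating vector $\bv=(1,-1,1,\ldots,(-1)^{m-1})$, this reduces to $\bv D U=\mathbf{e}_1^T$, equivalently $\bv D=\mathbf{e}_1^T U^{-1}$, which uniquely determines $D$ via
\[
D_{jj}=(-1)^{j-1}(U^{-1})_{1,j},\qquad j=1,\ldots,m,
\]
and in particular forces $D_{11}=1$.

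The heart of the argument is to identify these entries with ratios of minors of $A$. Classical LU is compatible with leading principal submatrices, so $A^{(j)}=\tilde B^{(j)} U^{(j)}$ and $\det A^{(j)}=\det \tilde B^{(j)}=\tilde B_{jj}\cdot\det A^{(j-1)}$. Block upper-triangularity of $U$ gives $(U^{-1})_{1,j}=((U^{(j)})^{-1})_{1,j}$, and because $(\tilde B^{(j)})^{-1}$ is lower triangular, only the $l=j$ term survives in
\[
((A^{(j)})^{-1})_{1,j}=\sum_{l}((U^{(j)})^{-1})_{1,l}\,((\tilde B^{(j)})^{-1})_{l,j}=((U^{(j)})^{-1})_{1,j}/\tilde B_{jj}.
\]
Cramer's rule applied to $A^{(j)}$ expresses the left side as $(-1)^{j+1}\det \hat A^{(j-1)}/\det A^{(j)}$, and combining these identities yields
\[
D_{jj}=\frac{\det \hat A^{(j-1)}}{\det A^{(j-1)}},\qquad j=2,\ldots,m,
\]
with the conventions $\det A^{(0)}=\det \hat A^{(0)}=1$. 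The hat hypotheses $\det \hat A^{(k)}\neq 0$ for $k=1,\ldots,m-1$ are then exactly the invertibility of $D$, hence of $B=\tilde B D^{-1}$; uniqueness of $(B,C)$ follows from that of $\tilde B,U$ and $D$.

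For part~(2), I would adapt the same strategy to the rectangular $(m{-}1)\times m$ setting, either developing a direct LU analogue for such matrices (producing a lower triangular $\tilde B\in \GL_{m-1}$ with $\tilde B^{-1}A$ in a suitable echelon form) and then extracting a diagonal factor as in~(1), or bordering $A$ by the row $\mathbf{e}_m^T=(0,\ldots,0,1)$ to reduce to part~(1), after checking that the resulting $B$-factor has last row $\mathbf{e}_m^T$ and so descends to the $(m{-}1)\times(m{-}1)$ factor in the statement. The main obstacle in both parts is the Cramer-type identification of $(U^{-1})_{1,j}$ with $\det \hat A^{(j-1)}/\det A^{(j-1)}$; this is where the hat conditions enter decisively, and everything else is routine block algebra.
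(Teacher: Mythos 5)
Your proposal is correct, and it takes a genuinely different route from the paper's. The paper contains no self-contained proof of Theorem \ref{ModCholFac}: the result is quoted from \cite{DP1} and \cite{P}, where it is obtained from the block-representation framework of \S \ref{S:sec2} --- one shows that $K$ (resp.\ $K'$) lies in the open orbit of the solvable group $B_m\times C_m$ (resp.\ $B_{m-1}\times C_m$) acting by $(B,C)\cdot A = B\,A\,C^{-1}$, that the isotropy is trivial, and that the orbit is exactly the locus where all $\det A^{(k)}$ and $\det \hat A^{(k)}$ are nonzero; existence and uniqueness of the factorization are then the bijectivity of the orbit map. Your argument is instead elementary: reduce to classical LU, insert a diagonal twist $D$, and identify $D_{jj}=\det \hat A^{(j-1)}/\det A^{(j-1)}$ by Cramer's rule, so that the hat-hypotheses are precisely the invertibility of $D$. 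I have checked the computation (including the signs coming from the alternating first row of $K^{-1}$) and it is right, and uniqueness closes correctly because any admissible pair $(B,C)$ determines a unitriangular LU factorization together with a diagonal matrix satisfying the same linear constraint. For part (2) the bordering reduction works, with one small correction: after appending the row $(0,\dots,0,1)$ and applying part (1), the last row of the lower-triangular factor $B'$ is $(0,\dots,0,C_{mm}^{-1})$ rather than $(0,\dots,0,1)$; this is harmless, since lower-triangularity alone shows the first $m-1$ rows of $B'$ have the form $(B''\mid 0)$, whence $A = B''\,K'\,C$, and uniqueness follows by lifting any admissible factorization back to the bordered matrix. What each approach buys: yours is self-contained and makes the role of each hypothesis transparent as the nonvanishing of an explicit minor; the paper's is uniform across all the Cholesky-type cases and produces exactly the group-action data (open orbit, isotropy, relative invariants) that the later freeness, $K(\pi,1)$, and cohomology results (Theorems \ref{Choldetarr}, \ref{CholFacExam}, \ref{CohomComplFibKZm}) actually consume.
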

\par
This factorization yields unexpected forms and even one for 
non--square matrices.  The corresponding determinantal arrangements are 
defined by 
$$  \prod \det (A^{(k)}) \cdot \prod \det (\hat A^{(j)}) \,\, = \,\, 0$$
 where the products are over $1 \leq k \leq m$, $1 \leq j \leq m - 1$ for 
case (1) and over $1 \leq  k \leq m - 1$ and $1 \leq j \leq m - 1$ for case 
(2).  In the next section, we explain how as a consequence of \cite{DP1} and 
\cite{P}, these determinantal arrangements are free divisors.  As we will 
explain, these are special cases of a general result which constructs such 
determinantal arrangements from representations of solvable linear 
algebraic groups.  In fact there are many other families of determinantal 
arrangements which similarly arise (see \cite{DP1}).  This representation will 
then allow us to explicitly describe the complements to the determinantal 
arrangements and give criteria that they are $K(\pi, 1)$\rq s.

\section{Block Representations for Solvable Groups}  
\label{S:sec2} 
\par
All of the examples of Cholesky--type factorization given in \S \ref{S:sec1} 
can be viewed as statements about open orbits for representations of 
solvable linear algebraic groups.  For example, for the case of symmetric 
matrices, there is the representation of the Borel subgroup of $m \times m$ 
lower triangular matrices $B_m$ acting on the space of $m \times m$ 
symmetric matrices $Sym_m$ given by
\begin{equation}
C \cdot S \quad = \quad C\, S \, C^T \qquad  \makebox{ for $C \in B_m$ 
and $S \in Sym_m$}. 
\end{equation}
\par
However, not all such representations have the desired properties.  We 
consider a special class of finite dimensional (complex) regular 
representations $\rho : G \to \GL(V)$ of solvable linear algebraic groups $G$ 
(throughout this paper the solvable groups will always be understood to be 
connected).  Such a representation will be called an {\em equidimensional 
representation} if $\dim G = \dim V$ and $\ker(\rho)$ is finite.  We will 
specifically be interested in the case where $G$ has an open orbit, which is 
then Zariski open.  We refer to the complement, which consists of the orbits 
of positive codimension, as the {\it exceptional orbit variety} $\cE \subset 
V$.  \par
Mond first observed that in this situation it may be possible to apply Saito\rq 
s criterion to conclude that $\cE$ is a free divisor.  This has led to a new 
class of \lq\lq linear free divisors\rq\rq.  The question is when does Saito\rq 
s criterion apply.  In the case of reductive groups $G$, Mond and Buchweitz 
\cite{BM} used quivers of finite type to discover a large collection of linear 
free divisors.  We consider instead the situation for solvable algebraic 
groups.  
\begin{Remark} 
We note that such representations with a Zariski open orbit were studied 
many years ago by Sato and Kimura who called them {\it prehomogeneous 
vector spaces} except they did not require the representations to be 
equidimensional.  Also, they studied them from the perspective of harmonic 
analysis (see \cite{SK} and \cite{Ki}).  
\end{Remark}
\par 
We consider for a representation  $\rho : G \to \GL(V)$ the natural 
commutative diagram of (Lie) group and Lie algebra homomorphisms (see 
\cite{DP1}). \par
\vspace{1ex}
\flushpar
{\it Exponential Diagram for a Representation} \par
\begin{equation}
\label{CD.assoc.vfs}
\begin{CD} 
{\g} @>{\tilde{\rho}}>>  { \gl(V) } @>{\tilde{\iti}}>>
{\itm \cdot \theta(V)}\\
@V{\exp}VV     @V{\exp}VV      @V{\exp}VV   \\
{G}  @>{\rho}>>  {\GL(V)}  @>{\iti}>>  {\Diff(V, 0)}  
\end{CD} 
\end{equation}
 Here $\Diff(V, 0)$ denotes the group of germs of diffeomorphisms; 
$\theta(V)$ denotes the germs of holomorphic vector fields on $V, 0$, but 
with Lie bracket the negative of the usual Lie bracket; and  $\tilde{\iti}(A)$ 
is the vector field which at $v \in V$ has the value $A\cdot v$.   \par
For an equidimensional representation, the composition $\tilde \iti \circ 
\tilde \rho$ is a Lie algebra isomorphism onto its image.  The image of a 
vector $v \in \g$ will be denoted by $\xi_v$ and called a {\it representation 
vector field} associated to $v$.  For a basis $\{ v_i, i = 1, \dots , N\}$ of 
$\g$, we obtain $N$ representation vector fields $\{ \xi_{v_i}\}$.  \par
For a basis $\{w_j, j = 1, \dots , N\}$ of $V$, we can represent 
\begin{equation}
\label{Eqn1.7}
         \xi_{v_i} \,\, =  \,\, \sum_{j = 1}^{n} a_{j, i} w_j   \qquad  i = 1, \dots 
, N  
\end{equation}
where $a_{i, j} \in \cO_{V, 0}$.  We refer to the matrix $A = (a_{i, j})$ as the 
{\it coefficient matrix}.  Its columns are the coefficient functions for the 
vector fields.  For an equidimensional representation with open orbit, the 
exceptional orbit variety is defined (possibly with nonreduced structure) by 
the determinant $\det(A)$, which we refer to as the {\it coefficient 
determinant}.  As Mond observed, by Saito\rq s Criterion, if the coefficient 
determinant is a reduced defining equation for $\cE$, then $\cE$ is a free 
divisor which is called a {\it linear free divisor}.  We shall use the Lie algebra 
structure for the case of solvable algebraic groups to obtain linear free 
divisors.  \par
There is a special class of representations of solvable algebraic groups which 
we introduced in \cite{DP1}.
\begin{Definition}
\label{Def4.4}
  An equidimensional representation $V$ of a connected linear algebraic 
group $G$ will be called a {\em block representation} if:
\begin{itemize}
\item[i)]  there exists a sequence of $G$-invariant subspaces  
$$   V = W_k \supset W_{k-1} \supset \cdots  \supset W_{1}  \supset 
W_{0} = (0);  $$
\item[ii)] for the induced representation $\rho_j : G \to GL(V/W_j)$, we let  
$K_j = \ker(\rho_j)$, then $\dim K_j = \dim W_j$ for all $j$ and the 
equidimensional action of $K_j/K_{j-1}$ on $W_j/W_{j-1}$ has a relatively 
open orbit for each $j$;  
\item[iii)]  the relative coefficient determinants $p_j$ for the 
representations of $K_j/K_{j-1}$ on $W_j/W_{j-1}$ are all reduced and 
relatively prime in $\cO_{V, 0}$. 
\end{itemize} 
\end{Definition}
\par
\begin{Remark}
If in the preceding definition, both i) and ii) hold, with the relative coefficient 
determinants non-zero but possibly nonreduced or not relatively prime in 
pairs, then we say that it is a {\em nonreduced block representation}. 
\end{Remark} \par
The two terms \lq\lq relative coefficient determinants\rq\rq and  \lq\lq 
relatively open orbits\rq\rq are explained in more detail in \cite{DP1}.  For 
our purposes here, we can briefly explain their meaning by considering the 
coefficient matrix.  We choose a basis for $V$ and $\g$ formed from bases 
for the successive $W_j/W_{j-1}$ and $\k_j/\k_{j-1}$, $j = k, k-1, \dots 1$, 
where $\k_j$ is the Lie algebra of $K_j$.  We obtain a block triangular matrix 
coefficient matrix for the corresponding representation vector fields.  \par
\vspace{1ex}
{\bf Block Triangular Form:}  \hfill \par
\begin{equation}
\label{matr4.5}
\begin{pmatrix}
D_k &0 & 0 & 0 & 0 \\ *  & D_{k-1} & 0 & 0 & 0 \\  *  & * & \ddots & 0 & 0 
\\
*  & * &* & \ddots & 0 \\  *  & *  & *  & * & D_1
\end{pmatrix}
\end{equation}
The \lq\lq relative coefficient determinants\rq\rq are $p_j = \det(D_j)$.  
These are polynomials defined on $V$ even though they are for the 
representation $W_j/W_{j-1}$.  Also, the condition for \lq\lq relative open 
orbits\rq\rq\, is equivalent to $p_j$ not being identically $0$ (on $V$).

\par
Then, block representations give rise to free divisors, and in the nonreduced 
case to free* divisors (see \cite{DP1} or \cite{P}).
\begin{Thm}
\label{BlkRepFrDiv} 
Let $\rho : G \to  \GL(V)$ be a block representation of a solvable linear 
algebraic group $G$, with relative coefficient determinants $p_j,\, j = 1, 
\dots, k$. Then, the \lq\lq exceptional orbit variety\rq\rq\ $\cE, 0 \subset 
V, 0$ is a linear free divisor with reduced defining equation $\prod_{j = 
1}^{k} p_j = 0$.  \par
If instead $\rho : G \to  \GL(V)$ is a nonreduced block representation, then 
$\cE, 0 \subset V, 0$ is a linear free* divisor and $\prod_{j = 1}^{k} p_j = 0$ 
is a nonreduced defining equation for $(\cE, 0)$.
\end{Thm}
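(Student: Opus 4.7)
The plan is to apply a suitable form of Saito's criterion to the $N = \dim V$ representation vector fields $\xi_{v_1}, \ldots, \xi_{v_N}$ associated to a basis of $\g$ adapted to the filtration. The inclusions $K_{j-1} \subset K_j$ (which follow from $K_j = \ker(\rho_j)$ and the surjection $V/W_{j-1} \twoheadrightarrow V/W_j$) induce a Lie algebra filtration $\k_{j-1} \subset \k_j$. Choosing a basis of $\g$ assembled from bases of the successive $\k_j/\k_{j-1}$ and a basis of $V$ assembled from bases of $W_j/W_{j-1}$, the fact that $\tilde\rho(u)\cdot V \subset W_j$ for $u \in \k_j$ makes the coefficient matrix in (\ref{Eqn1.7}) take the block lower-triangular form (\ref{matr4.5}). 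Its determinant is $\prod_{j=1}^{k}\det(D_j) = \prod_{j=1}^{k} p_j$, and since the $\xi_{v_i}(x)$ span $T_x(G\cdot x)$, the exceptional orbit variety $\cE$ equals the zero locus of $\prod_j p_j$ as a set.

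Each $\xi_{v_i}$ is logarithmic for $\prod_j p_j$. Indeed, $\cE$ is a union of $G$-orbits, hence $G$-invariant; by connectedness of $G$ each irreducible component of $\cE$ is $G$-invariant, and the Sato-Kimura theory of prehomogeneous vector spaces \cite{SK} identifies every such component as the zero locus of an irreducible semi-invariant of the $G$-action on $V$. Hence each $p_j$ factors into semi-invariants of $G$, so $\xi_v(p_j) = \chi_j(v)\,p_j$ for a character $\chi_j$ of $\g$, and
\[
\xi_v\!\Bigl(\prod_{j=1}^{k} p_j\Bigr) \;=\; \Bigl(\sum_{j=1}^{k}\chi_j(v)\Bigr)\prod_{j=1}^{k} p_j,
\]
which is the desired logarithmic tangency.

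When condition (iii) of Definition \ref{Def4.4} holds, $\prod_j p_j$ is a reduced defining equation for $\cE$. The coefficient matrix $A$ has linear entries, determinant equal to this reduced defining equation, and logarithmic columns; Saito's criterion \cite{Sa} then identifies $\cE$ as a free divisor with
\[
\Derlog(\cE,0) \;=\; \bigoplus_{i=1}^{N} \cO_{V,0}\cdot\xi_{v_i},
\]
and linearity of the $\xi_{v_i}$ upgrades this to a linear free divisor. In a nonreduced block representation condition (iii) may fail, so $\prod_j p_j$ is only a (possibly nonreduced) defining equation; the same Saito-type matrix identity together with the logarithmic columns is exactly the defining data for a linear free$^*$ divisor in the sense of \cite{D}, which yields the second statement.

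The main obstacle I anticipate is the logarithmic tangency: verifying that each $p_j$ is a semi-invariant for the \emph{full} $G$-action on $V$ rather than only for the subquotient $K_j/K_{j-1}$ acting on $W_j/W_{j-1}$ from which $D_j$ is built. The block triangular structure alone does not encode this; one needs connectedness of $G$, the $G$-invariance of each $W_j$, and the Sato-Kimura classification of the irreducible components of the singular set of a prehomogeneous vector space.
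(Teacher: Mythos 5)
Your argument is correct and takes essentially the approach the paper intends (the paper defers the detailed proof to \cite{DP1} and \cite{P}, but \S 2 sets up exactly this machinery): the bases adapted to the filtrations $\{W_j\}$ and $\{\k_j\}$ produce the block-triangular coefficient matrix (\ref{matr4.5}) with determinant $\prod_j p_j$ cutting out $\cE$, and Saito's criterion applied to the logarithmic representation vector fields gives the free (resp.\ free*) divisor conclusion. The tangency step you flag as the main obstacle can also be handled without Sato--Kimura: each $\xi_v$ is tangent to every $G$-orbit and $\cE$ is a union of orbits, so $\xi_v(h)$ vanishes on $\cE$ and hence lies in the ideal $(h)$ of the reduced defining equation, though your route through relative invariants is valid and yields the explicit character identity that the paper exploits later in \S 4.
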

\par
In \cite{DP1} it is shown that all of the determinantal arrangements arising 
from Cholesky-type factorizations in \S \ref{S:sec1} are in fact the 
exceptional orbit varieties for the equidimensional representations of 
appropriate solvable linear algebraic groups.    There is then the following 
consequence for these determinantal arrangements

\begin{Thm} \hfill
\label{Choldetarr} 
\begin{itemize}
\item[i)]  The determinantal arrangements arising from the cases of 
Cholesky--type factorization for complex symmetric matrices and  modified 
Cholesky--type factorization for complex general $m \times m$ and $(m-1) 
\times m$ matrices are exceptional orbit varieties for the block 
representations of the corresponding solvable algebraic groups. As such 
they are free divisors.
\item[ii)]  The determinantal arrangements arising from the 
Cholesky--type factorization for complex general $m \times m$ matrices, 
and $m \times m$ skew-symmetric matrices (m even) are exceptional orbit 
varieties for the nonreduced block representations of the corresponding 
solvable algebraic groups.  As such they are free* divisors.
\end{itemize}
\end{Thm}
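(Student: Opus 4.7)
The plan is to prove both parts uniformly by exhibiting, case by case, an explicit solvable linear algebraic group $G$ acting on the appropriate matrix space $V$ (symmetric, general square, general $(m-1)\times m$, or skew-symmetric) so that $\dim G = \dim V$, identifying the open orbit, verifying the axioms of Definition~\ref{Def4.4} (resp.\ its nonreduced weakening), and then appealing to Theorem~\ref{BlkRepFrDiv} to conclude that the resulting exceptional orbit variety is a (free or free*) linear free divisor whose defining equation is the product of the relative coefficient determinants. The Cholesky--type factorization theorems of \S\ref{S:sec1} supply both the group (the one that effects the factorization, essentially the lower block triangular factors $B$ together with the diagonal/middle factors $C,K$ or $K'$ where present) and the candidate open orbit (the locus where the factorization exists and is unique up to the finite kernel consisting of the sign ambiguity).

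For each case I would build the filtration $V = W_k \supset \cdots \supset W_0 = (0)$ from the principal upper-left $k\times k$ submatrices (resp.\ their $(2k)\times(2k)$ analogues in the skew-symmetric case, or the pair consisting of $A^{(k)}$ and $\hat A^{(k)}$ in the modified cases). Because the factoring groups act via left multiplication by a lower triangular matrix (and, in the modified cases, right multiplication by a suitable upper triangular matrix), the condition that each $W_j$ be $G$-invariant reduces to the elementary fact that multiplying a matrix on the left by a lower triangular matrix and on the right by an upper triangular matrix preserves the zero-block structure of the upper-left corner. The induced action of $K_j/K_{j-1}$ on $W_j/W_{j-1}$ is then literally the Cholesky factorization problem for the corner matrix (or pair of corner matrices), and Theorem~\ref{ComplCholFac} or \ref{ModCholFac} gives exactly the statement that $K_j/K_{j-1}$ acts with a relatively open orbit; a direct dimension count using $\dim G = \dim V$ verifies $\dim K_j = \dim W_j$. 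The relative coefficient determinants $p_j = \det(D_j)$ are, by construction of the block, the polynomials $\det(A^{(k)})$ or $\det(\hat A^{(k)})$ (perhaps up to a constant), which matches the proposed defining equation.

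The one nontrivial step, and the main obstacle, is deciding in each case whether the $p_j$ are reduced and pairwise relatively prime, and this is what separates (i) from (ii). The polynomials $\det(A^{(k)})$ and $\det(\hat A^{(k)})$ are manifestly irreducible polynomials in distinct subsets of the matrix entries, so pairwise coprimeness is essentially automatic; the delicate point is reducedness of each individual $p_j$, equivalently whether the diagonal block $D_j$ contributes a determinant with a repeated factor. For the symmetric case and the two modified cases the group has enough scaling freedom on each diagonal block that the block $D_j$ is essentially a single scalar (or a genuine Jacobian of a principal-minor map), and $p_j = \det(A^{(k)})$ or $\det(\hat A^{(k)})$ appears with multiplicity one; this gives part (i). For the general $m \times m$ LU case and the skew-symmetric case, the diagonal block $D_j$ turns out to produce $\det(A^{(k)})^2$ (respectively a power of the Pfaffian-type determinant $\det(A^{(2k)})$), because the constraints forcing the upper triangular factor $C$ to have $1$'s on the diagonal are not present, and the extra torus direction doubles the contribution. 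Consequently the coefficient determinant is nonreduced, giving only a free* structure as in part (ii).

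Once these reducedness computations are carried out (they can be organized as a single calculation of $\det D_j$ in each of the four families using the explicit parametrizations of the lower triangular groups in \cite{DP1}, \cite{P}), the theorem follows immediately by applying Theorem~\ref{BlkRepFrDiv} to the block (resp.\ nonreduced block) representation so constructed. I would structure the write-up as four short subsections---symmetric, modified general square, modified $(m-1)\times m$, general square LU, skew-symmetric---each consisting of the explicit $G$, the filtration $W_j$, a one-line identification of $p_j$, and a short reducedness check, deferring the longer verifications to \cite{DP1} and \cite{P}.
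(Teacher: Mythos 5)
Your overall architecture is the same as the paper's: for each factorization, exhibit the solvable group and equidimensional representation (these are exactly the entries of Table~\ref{table2.0}), verify that each is a block or nonreduced block representation in the sense of Definition~\ref{Def4.4}, and then invoke Theorem~\ref{BlkRepFrDiv}. In fact the paper's proof consists of nothing more than this: it records the groups and actions in the table and outsources the entire verification of the block-representation axioms (invariant flag, relatively open orbits, reducedness and coprimality of the $p_j$) to \cite{DP1}. Since you also end by deferring ``the longer verifications'' to \cite{DP1} and \cite{P}, your proposal is essentially the paper's proof with an attempted sketch of the deferred step filled in.

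That sketch, however, misdescribes the one point you correctly identify as the crux, namely why case (ii) is nonreduced. For the unmodified LU factorization the group is $B_m \times N_m$, where $N_m$ is the \emph{unipotent} group of upper triangular matrices with $1$'s on the diagonal: the normalization of $C$ \emph{is} imposed, and the maximal torus has rank $m$, coming only from $B_m$. It is the \emph{modified} case $B_m \times C_m$ (the free one) that carries the larger torus, of rank $2m-1$. So ``the constraints forcing $C$ to have $1$'s on the diagonal are not present, and the extra torus direction doubles the contribution'' gets the two situations backwards. The actual source of nonreducedness is that the two-sided action squares the intermediate minors: the coefficient determinant for $B_m\times N_m$ is (up to a constant) $\det(A^{(m)})\cdot\prod_{k=1}^{m-1}\det(A^{(k)})^{2}$ --- e.g.\ $x^{2}(xw-yz)$ for $m=2$, versus the reduced $x(xw-yz)$ of Table~\ref{table3.0} --- while in the skew-symmetric case $\det(A^{(2k)})$ is itself the square of a Pfaffian. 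If you intend to actually carry out the reducedness computation rather than cite \cite{DP1} as the paper does, this portion would need to be redone; as written it would not produce the correct multiplicities, even though the conclusion you draw from it happens to be the right one.
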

\begin{proof}
We list in Table \ref{table2.0}, each type of complex (modified) 
Cholesky--type factorization, the space of complex matrices, and the 
solvable group and representation which define the factorization.  For this 
table we use the notation that the spaces of complex $m \times m$ 
matrices are denoted by:  $Sym_m$ for symmetric matrices, $M_{m, m}$ 
for general $m \times m$ matrices, and $Sk_m$ for skew--symmetric 
matrices.  We also let  $M_{m-1, m}$ denote the space of complex ${m-1} 
\times m$ matrices.  For the groups we use the notation:  $B_m$ for the 
Borel group of $m \times m$ lower triangular matrices; $N_m$ for the 
nilpotent group of $m \times m$ upper triangular matrices with $1$\rq s on 
the diagonal; for $m = 2\ell$, $D_m$ denotes the group of lower block 
triangular matrices, with $2 \times 2$ diagonal blocks of the form a) in 
(\ref{Eqn1.1}) with complex entries of the same sign $r \neq 0$ (i.e. $r\cdot 
I$); while for $m = 2\ell +1$, $D_m$ denotes the group of lower block 
triangular matrices with the first $\ell\,$ $(2 \times 2)$-diagonal blocks as 
above and the last diagonal element $= 1$; and $C_m$ is subgroup of the $m 
\times m$ upper triangular matrices with $1$ in the first entry and other 
entries in the first row $= 0$. \par
These are each either block or nonreduced block representations, as is 
shown in \cite{DP1}, so that Theorem \ref{BlkRepFrDiv} applies.
\end{proof}
\begin{table}
\begin{tabular}{|l|c|c|l|}
\hline
Cholesky--type  & Matrix Space & Solvable Group & Representation \\
Factorization  &     &      &     \\
\hline
symmetric matrices & $Sym_m$ & $B_m$  &  $B\cdot A = B\, A\, B^T$\\
general  matrices  &$M_{m, m}$ & $B_m \times N_m$  &  $(B, C)\cdot A = 
B\, A\, C^{-1}$\\
skew-symmetric &  $Sk_m$  & $D_m$  &  $B\cdot A = B\, A\, B^T$  \\
\hline \hline
Modified Cholesky  &     &    &     \\
--type Factorization  &     &      &     \\
\hline
general $m \times m$ & $M_{m, m}$  &  $B_m \times C_m$   &   $(B, 
C)\cdot A = B\, A\, C^{-1}$   \\
general $(m-1) \times m$  & $M_{m-1, m}$ & $B_{m-1} \times C_m$   &   
$(B, C)\cdot A = B\, A\, C^{-1}$     \\
\hline

\end{tabular}
\vspace*{.2cm}
\caption{\label{table2.0} Solvable groups and (nonreduced) Block 
representations for (modified) Cholesky--type Factorization.}
\end{table}

\begin{Remark}
For the case of skew-symmetric matrices, we have not found a modified 
form of Cholesky factorization for which the resulting determinantal 
arrangement is a free divisor.  However, by extending the results to 
representations of nonlinear solvable infinite dimensional Lie algebras, we 
have found a free divisor which is the analogue of the exceptional orbit 
variety (again see \cite{DP1} and \cite{P}).
\end{Remark}
\section{Complements of Exceptional Orbit Varieties for Block 
Representations of Solvable Groups}  
\label{S:sec3} 
We next see that for block (or nonreduced block) representations, not only 
are the exceptional orbit varieties free divisors (resp. free* divisors), but 
they also share the additional property of having a complement which is a 
$K(\pi, 1)$.  \par
\begin{Thm}
\label{BlkKpione} 
Let $\rho : G \to  \GL(V)$ be a block representation of a solvable linear 
algebraic group $G$ whose rank is $m$.  Then, the complement of the 
exceptional orbit variety, $V \backslash \cE$, is a $K(\pi, 1)$ where $\pi$ is 
isomorphic to an extension of $\Z^m$ by the finite isotropy subgroup for a 
generic $v_0 \in V$.  \par
If instead $\rho : G \to  \GL(V)$ is a nonreduced block representation, then 
although $\cE, 0 \subset V, 0$ is only a linear free* divisor, the complement 
$V \backslash \cE$ is still a $K(\pi, 1)$ with $\pi$ as above.  
\end{Thm}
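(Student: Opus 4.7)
The strategy is to identify $V \setminus \cE$ with the open $G$-orbit and to exploit the fact that a connected solvable complex linear algebraic group of rank $m$ is homotopy equivalent to the compact real torus $(S^{1})^m$. Once these two ingredients are in hand, the conclusion drops out of covering space theory.

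First, since $\rho$ is equidimensional with a (necessarily unique) open orbit $\cO$, and since by definition $\cE$ collects the $G$-orbits of positive codimension, one has the set-theoretic equality $V \setminus \cE = \cO$. For any $v_{0} \in \cO$ the orbit map descends to a $G$-equivariant isomorphism $G/G_{v_{0}} \cong \cO$, and equidimensionality forces $\dim G_{v_{0}} = \dim G - \dim \cO = 0$, so $G_{v_{0}}$ is a finite algebraic subgroup of $G$. None of this uses reducedness or relative primality of the $p_{j}$, so the argument will apply verbatim in the nonreduced block case.

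Next I would invoke the structure of connected solvable complex linear algebraic groups: $G$ decomposes as a semidirect product $G = T \ltimes U$, where $T \cong (\C^{*})^{m}$ is a maximal torus and $U$ is the unipotent radical. For any unipotent complex algebraic group the exponential map $\mathfrak{u} \to U$ is an isomorphism of complex affine varieties, so $U$ is contractible; consequently $G$ deformation retracts onto $T \simeq (S^{1})^{m}$ and is a $K(\Z^{m}, 1)$. Finally the quotient $G \to G/G_{v_{0}}$ is a principal bundle with discrete finite structure group, hence a regular (Galois) covering of complex manifolds, and the long exact sequence of homotopy groups forces $\pi_{n}(V \setminus \cE) = \pi_{n}(G) = 0$ for $n \geq 2$ together with
\[
1 \longrightarrow \pi_{1}(G) \longrightarrow \pi_{1}(V \setminus \cE) \longrightarrow G_{v_{0}} \longrightarrow 1,
\]
where $\pi_{1}(G) \cong \Z^{m}$ by the previous step. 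Setting $\pi := \pi_{1}(V \setminus \cE)$ then exhibits $V \setminus \cE$ as a $K(\pi, 1)$ for an extension of the required form, and the same argument covers the nonreduced block case since only the existence of the open orbit was used.

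The essential, and only genuinely non-routine, point is the identification of the homotopy type of $G$: combining the algebraic decomposition $G = T \ltimes U$ with the analytic contractibility of $U$ is what converts purely algebraic data (solvability and rank $m$) into the topological conclusion that $\pi$ is an extension of $\Z^{m}$ by a finite group. Everything else is a direct consequence of equidimensionality, a finite-stabilizer count, and elementary covering space theory.
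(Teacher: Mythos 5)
Your proposal is correct and follows essentially the same route as the paper's own proof: identify $V \backslash \cE$ with the open orbit $G/H$ for a finite isotropy group $H$, use the structure theorem $G = T \ltimes U$ with $U$ unipotent (hence contractible) to see that $G$ is a $K(\Z^m,1)$, and then apply covering space theory to the finite regular covering $G \to G/H$ to obtain the extension $1 \to \Z^m \to \pi \to H \to 1$. The only cosmetic difference is that you justify the contractibility of $U$ via the algebraic exponential isomorphism $\mathfrak{u} \to U$, whereas the paper cites Borel and Knapp for the fact that $U$ is a Euclidean group.
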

\begin{proof}
Let $\cU$ denote the Zariski open orbit of $G$.  We choose $v_0 \in \cU$. 
The map $G \to \cU$ sending $g \mapsto g\cdot v_0$ is surjective, as is the 
corresponding derivative map.  By the equidimensionality, the isotropy 
subgroup $H \subset G$ for $v_0$ is a $0$-dimensional algebraic group, and 
hence finite.  By standard results for Lie groups, the induced mapping $G/H 
\to \cU$ sending $gH \mapsto g\cdot v_0$ is a diffeomorphism.  As $G$ is 
connected, $p: G \to G/H$ is a fiber bundle with finite fiber and connected 
total space; hence, it is a finite covering space.  \par
Also, by the structure theorem for connected solvable groups, $G$ is the 
extension of its maximal torus $(\C^*)^m$ by its unipotent radical $N$.  It is 
a standard result for algebraic groups that the nilpotent group $N$ is a 
Euclidean group, i.e. the underlying manifold is diffeomorphic to some $\C^k$ 
(for example, by Corollary 4.8 in \cite{Bo}, it is a subgroup of some upper 
triangular group in $SL_n (\C)$, and then Corollary 1.134 and Theorem 1.127  
of \cite{Kn} yield the result).  Hence, $G$ has the homotopy type of its 
maximal torus, which is a $K( \Z^m, 1)$, where $m = \rank(G)$.  \par 
Hence, a simple argument using the homotopy exact sequence for a fibration 
shows that $G/H$ is also a $K(\pi, 1)$ and by basic results on covering 
spaces, $H \simeq \pi_1(G/H, \bar{e})/ p_*(\pi_1(G, e))$ (for $\bar{e} = 
e\cdot H$).  Thus, $G/H$ is a $K(\pi, 1)$ with $\pi$ isomorphic to the 
extension of $\Z^m$ by $H$.  
\end{proof}
As a consequence we are able to describe the Milnor fiber of the 
nonisolated hypersurface singularity $(\cE, 0)$.
\begin{Thm}
\label{MilFibKpione}  
Let $\rho : G  \to  \GL(V)$ be a (nonreduced) block representation of a 
solvable linear algebraic group $G$ whose rank is $m$, with $ \cE$ the 
exceptional orbit variety.  Then, the Milnor fiber of the nonisolated 
hypersurface singularity $ (\cE, 0)$ is a $K(\pi, 1)$, where $\pi$ satisfies 
\begin{equation}
\label{CD1}
\begin{CD} 
{0 } @>>> {\pi} @>>>  { \pi_1(V \backslash \cE) } @>>> {\Z}  @>>>  {0 }
\end{CD}   \\
\end{equation}
with $\pi_1(V \backslash \cE)$ isomorphic to an extension of $\Z^m$ by 
the finite isotropy subgroup for a generic $v_0 \in V$.  \par
\end{Thm}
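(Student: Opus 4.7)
The plan is to realise the Milnor fibre $F$ of $(\cE,0)$ as the typical fibre of a global Milnor fibration over $\C^*$ and then to combine this with Theorem \ref{BlkKpione} via the homotopy long exact sequence. Since every entry of the block-triangular coefficient matrix (\ref{matr4.5}) is a linear function on $V$, each $p_j = \det(D_j)$ is homogeneous, and so is $f := \prod_{j=1}^{k} p_j$ (as is any reduced factor of it). Exploiting this homogeneity, the restriction $f : V \backslash \cE \to \C^*$ is a locally trivial $C^\infty$-fibration --- over a small disc $U \subset \C^*$ one trivialises by scaling by a holomorphically chosen $d$-th root of $t/t_0$ --- and Milnor's classical identification for homogeneous polynomials furnishes a diffeomorphism between the local Milnor fibre of $(\cE,0)$ and the global fibre $F := f^{-1}(1)$.

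With the fibration $F \hookrightarrow V \backslash \cE \to \C^*$ in hand, I would plug it into the homotopy long exact sequence. From $\pi_k(\C^*) = 0$ for $k \geq 2$ and from the $K(\pi,1)$-property $\pi_k(V \backslash \cE) = 0$ for $k \geq 2$ supplied by Theorem \ref{BlkKpione}, one immediately reads off $\pi_k(F) = 0$ for $k \geq 2$. Hence $F$ is itself a $K(\pi,1)$ with $\pi := \pi_1(F)$, and the low-degree tail of the LES,
\begin{equation*}
0 \longrightarrow \pi_1(F) \longrightarrow \pi_1(V \backslash \cE) \overset{f_*}{\longrightarrow} \pi_1(\C^*) = \Z \longrightarrow \pi_0(F) \longrightarrow 0,
\end{equation*}
produces the desired extension (\ref{CD1}) as soon as $f_*$ is surjective (equivalently, as soon as $F$ is connected).

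The step that requires attention is precisely this surjectivity of $f_*$. It is straightforward when $f$ is taken to be a reduced defining equation for $\cE$: a small meridian loop around a smooth point of the hypersurface is then mapped by $f$ to a loop of winding number one in $\C^*$. In the nonreduced block representation case, where $\prod p_j$ itself need not be reduced, I would replace $f$ by the reduced equation for $\cE$ as a set; the Milnor fibre of the hypersurface germ $(\cE,0)$ depends only on $\cE$ as a reduced hypersurface, so the preceding argument applies verbatim. The structure of $\pi = \pi_1(F)$ as sitting inside the extension of $\Z^m$ by the finite generic isotropy group $H$ is then inherited directly from Theorem \ref{BlkKpione}, completing the proposed proof.
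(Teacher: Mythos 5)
Your proposal is correct and follows essentially the same route as the paper: homogeneity gives a global fibration $h : V\backslash\cE \to \C^*$ (trivialized by scaling with a branch of the $d$-th root) whose fibre is identified with the local Milnor fibre of $(\cE,0)$, and the homotopy long exact sequence combined with Theorem \ref{BlkKpione} yields both the $K(\pi,1)$ property and the extension (\ref{CD1}). The only cosmetic differences are that the paper proves the local--global fibre identification directly by a transversality/Morse-theory argument rather than citing the classical fact, and establishes connectedness of the fibre via the Kato--Matsumoto theorem rather than your meridian-loop argument for surjectivity of $h_*$ onto $\pi_1(\C^*)$.
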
 
\begin{proof}
\par 
First, we observe that if $h$ is the reduced homogeneous defining equation 
for $\cE$, then $h : V \backslash \cE \to \C^*$ is a global fibration.  This 
follows using the $\C^*$--action.   If $h$ has degree  $d$, we can find an 
open neighborhood $U$ of $1$ in $\C^*$, invariant under inverses, so that 
the function $f(z) = z^d$ has a well-defined branch of the inverse $d$--th 
root function, which we denote by $\theta$.  For $w_0 \in \C^*$, there is a 
neighborhood $W$ of $w_0$ obtained by applying the $\C^*$--action to $U$, 
$z \mapsto  z\cdot w_0$. \par
Then,  a local trivialization is given by
\begin{align}
\label{aig3,1}
\psi : W \times h^{-1}(w_0) &\to h^{-1}(W) \\
(w, (z_1, \dots , z_n)) &\mapsto  \theta(z)\cdot (z_1, \dots , z_n) \notag
\end{align}
where $z = w/w_0$.  Then  by the homogeneity of $h$, 
$$h( \theta(z)\cdot  z_1, \dots , \theta(z)\cdot  z_n) \,\, =  \,\,  
\theta(z)^d h( z_1, \dots ,  z_n) \,\, = \,\, z\cdot w_0 \,\, = \,\, w $$
\par
As $\C^*$ is connected all fibers of $h : V \backslash \cE \to \C^*$ are 
diffeomorphic.  We next show that the Milnor fiber of $(\cE, 0)$ is 
diffeomorphic to a fiber of this fibration.   Given $\gevar > 0$ and $\gd > 0$ 
sufficiently small so that 
$h^{-1}(B_{\gd} \backslash \{ 0\}) \cap B_{\gevar}$ is the Milnor fibration 
of $(\cE, 0)$ and if $w \in B_{\gd}\backslash \{ 0\}$, then the fibers 
$h^{-1}(w)$ are tranverse to the $\gevar$--sphere $S^{2n-1}_{\gevar}$ 
about $0$.  
We further claim that by the $\C^*$--action, the fibers $h^{-1}(w)$, for $w 
\in B_{\gd}\backslash \{ 0\}$ are transverse to all spheres 
$S^{2n-1}_{R}$ for $R > \gevar$.  If $z \in h^{-1}(w)$ with $\| z\| = R  > 
\gevar$, we let $a = \frac{\gevar}{R}$ and $z^{\prime} = a\cdot z$.  Then, 
$$  h(z^{\prime})\,\,  = \,\, h(a\cdot z) \,\,  = \,\, a^d\, h(z)\,\,  = \,\, 
\left(\frac{\gevar}{R}\right)^d\cdot w = w^{\prime}\, . $$
Thus, $h(z^{\prime}) \in  B_{\gd}\backslash \{ 0\}$ and 
$\| z^{\prime} \| = a\cdot \| z \| = \gevar$.  Multiplication by $a$ sends 
$S^{2n-1}_{R}$ to $S^{2n-1}_{\gevar}$ and $h^{-1}(w)$ to 
$h^{-1}(w^{\prime})$.  Since $h^{-1}(w^{\prime})$ is transverse to 
$S^{2n-1}_{\gevar}$ at $z^{\prime}$, and transversality is preserved under 
diffeomorphisms, we conclude that $h^{-1}(w)$ is transverse to 
$S^{2n-1}_{R}$ at $z$.  \par  
Hence, on the fibers $X = h^{-1}(w) \backslash B_{\gevar}$, the function 
$g(z) = \| z\|$ has no critical points.   It then follows using Morse theory that 
$h^{-1}(w)$ is diffeomorphic to the Milnor fiber $h^{-1}(w) \cap B_{\gevar}$ 
as claimed.  \par
Finally, it is sufficient to show that a fiber $F$ of the fibration $h : V 
\backslash \cE \to \C^*$ is a $K(\pi, 1)$ with $\pi$ satisfying  (\ref{CD1}).  
As $F$ is diffeomorphic to the Milnor fiber of $(\cE, 0)$, we can at least 
conclude by e.g. the Kato-Matsumoto theorem that they are both $0$--
connected, i.e. path--connected (in the special case of $\dim V = 1$ it is 
trivially true).  Next, by the homotopy exact sequence for the fibration, we 
have   
\begin{equation}
\label{CD2}
\begin{CD} 
{\pi_{j+1} (\C^*)} @>>> {\pi_j(F) } @>>>  { \pi_j(V \backslash \cE) } @>>> 
{\pi_j(\C^*)}  @>>>  {\pi_{j-1}(F) }
\end{CD}   \\
\end{equation}
If $j >1$, then both $\pi_j(V \backslash \cE) = 0$, $\pi_{j+1}(\C^*) = 0$; 
hence, $\pi_j(F) = 0$ for $j > 1$. Thus, $F$ is a $K(\pi, 1)$.  Also, as $F$ is 
path--connected, then the long exact sequence (\ref{CD2}) with $j = 1$ 
yields (\ref{CD1}). 
\end{proof}
 \par 
As a corollary we have an important special case.
\begin{Corollary}
\label{MilFibKZm} 
Let $\rho : G \to  \GL(V)$ be a (nonreduced) block representation of a 
solvable linear algebraic group $G$ whose rank is $m$ so that the 
complement of the exceptional orbit variety $ \cE$ satisfies  
$\pi_1(V\backslash \cE) \simeq \Z^m$.  Then, $V\backslash \cE$ is 
homotopy equivalent to an $m$--torus, and the Milnor fiber of the 
nonisolated hypersurface singularity $(\cE, 0)$ is homotopy equivalent to an 
$(m - 1)$--torus.  
\end{Corollary}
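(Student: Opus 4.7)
My plan is to reduce the two assertions to the uniqueness (up to homotopy equivalence among CW complexes) of Eilenberg--MacLane spaces, applied to the conclusions of Theorems \ref{BlkKpione} and \ref{MilFibKpione}. For the first assertion, I would observe that $V \backslash \cE$ is an open subset of $V$, hence a smooth manifold and therefore has the homotopy type of a CW complex. By Theorem \ref{BlkKpione} it is a $K(\pi, 1)$, and the additional hypothesis $\pi_1(V \backslash \cE) \simeq \Z^m$ identifies it as a $K(\Z^m, 1)$. Since the $m$--torus $T^m = (S^1)^m$ is the standard CW model of $K(\Z^m, 1)$, the uniqueness theorem for Eilenberg--MacLane spaces (equivalently, a classifying map combined with Whitehead's theorem) yields a homotopy equivalence $V \backslash \cE \simeq T^m$.

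For the Milnor fiber $F$ of $(\cE, 0)$, I would invoke Theorem \ref{MilFibKpione}, which asserts that $F$ is itself a $K(\pi', 1)$ with $\pi'$ fitting into the short exact sequence
\begin{equation*}
0 \longrightarrow \pi' \longrightarrow \pi_1(V \backslash \cE) \longrightarrow \Z \longrightarrow 0 .
\end{equation*}
Under the present hypothesis this reads $0 \to \pi' \to \Z^m \to \Z \to 0$. Since $\Z$ is free abelian, the sequence splits, so $\pi'$ is free abelian of rank $m-1$, i.e.\ $\pi' \simeq \Z^{m-1}$. Because $F$ is a complex manifold (hence has the homotopy type of a CW complex) and a $K(\Z^{m-1}, 1)$, the same uniqueness argument identifies it up to homotopy with $T^{m-1}$.

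The argument is essentially formal given the two cited theorems, so I do not foresee a real obstacle. The one point that warrants a brief check is that the map $\pi_1(V \backslash \cE) \to \Z$ in the sequence above is genuinely surjective, so that $\pi'$ has rank precisely $m-1$ rather than something smaller; this surjectivity is already built into the proof of Theorem \ref{MilFibKpione} through the degree-$d$ global fibration $h : V \backslash \cE \to \C^*$, which on $\pi_1$ realizes multiplication by $d$ onto $\pi_1(\C^*) \simeq \Z$ and in particular is surjective on the relevant quotient.
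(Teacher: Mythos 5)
Your argument is correct and follows essentially the same route as the paper: cite Theorems \ref{BlkKpione} and \ref{MilFibKpione}, identify the fundamental groups as $\Z^m$ and $\Z^{m-1}$ (the paper argues $\pi$ is a subgroup of $\Z^m$ with quotient $\Z$, hence free abelian of rank $m-1$; your splitting argument is an equivalent variant), and conclude via uniqueness of Eilenberg--MacLane spaces. The only quibble is your closing aside: the surjectivity onto $\Z$ is already asserted by the exact sequence (\ref{CD1}) and is obtained in the paper from path-connectedness of the Milnor fiber in the homotopy sequence of the fibration, not from the degree-$d$ scaling loop --- a map that were literally ``multiplication by $d$'' on $\pi_1(\C^*)\simeq\Z$ would have image $d\Z$ and would \emph{not} be surjective --- but since you may simply quote (\ref{CD1}), this does not affect the proof.
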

\begin{proof}
By the hypothesis and Theorem \ref{BlkKpione}, $V\backslash \cE$ is a 
$K( \Z^m, 1)$ and hence is homotopy equivalent to the $m$--torus.  Second, 
by Theorem \ref{MilFibKpione}, the Milnor fiber is a $K(\pi, 1)$ where $\pi$ is 
a subgroup of $\Z^m$ with quotient $\Z$.  Thus, $\pi$ is a free abelian group 
and  by comparing ranks, $\pi \simeq \Z^{m-1}$.   Thus, the Milnor fiber is 
homotopy equivalent to an $(m-1)$--torus.
\end{proof}
\par
One example of the usefulness of these theorems is their general 
applicability to complements of determinantal arrangements in spaces of 
matrices corresponding to Cholesky or modified Cholesky--type 
factorizations, as well as to other (nonreduced) block representations given 
in \cite{DP1} and \cite{DP2}.
\subsection*{Determinantal Arrangements whose Complements are $K(\Z^k, 
1)$\rq~s}  \hfill
\par 
We return to the determinantal arrangements arising from Cholesky or 
modified Cholesky type factorizations that we considered in \S \ref{S:sec1}.  
We have the following general result for the topology of their complements.
\begin{Thm}
\label{CholFacExam}  
Each of the determinantal arrangements $\cE$ associated to the complex 
Cholesky--type factorizations in Theorem \ref{ComplCholFac} and the 
modified Cholesky--type factorizations in Theorem \ref{ModCholFac} 
have complements which are $K(\Z^k, 1)$\rq~s, where $k$ is the rank of the 
corresponding solvable group in Table \ref{table2.0}.  Hence, they are 
homotopy equivalent to $k$--tori; and the Milnor fibers of $(\cE, 0)$ are 
homotopy equivalent to $(k-1)$--tori.  \par
For the three families of complex symmetric matrices, and modified 
Cholesky factorizations for general complex $m \times m$ and 
$(m-1) \times m$ matrices the corresponding determinantal 
arrangements $\cE$ are free divisors with the preceding properties.
\end{Thm}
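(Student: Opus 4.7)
The plan is to assemble Theorems \ref{Choldetarr} and \ref{BlkKpione} together with Corollary \ref{MilFibKZm}, and then refine the extension appearing in Theorem \ref{BlkKpione} to identify $\pi$ explicitly as $\Z^k$. First, by Theorem \ref{Choldetarr}, each determinantal arrangement $\cE$ arising from a complex Cholesky--type factorization of Theorem \ref{ComplCholFac} or a modified Cholesky--type factorization of Theorem \ref{ModCholFac} is the exceptional orbit variety of the (nonreduced) block representation of the solvable group $G$ listed in Table \ref{table2.0}. Applying Theorem \ref{BlkKpione}, the complement $V \backslash \cE$ is a $K(\pi,1)$ with $\pi$ an extension of $\Z^k$ by the finite isotropy subgroup $H$ at a generic $v_0$, where $k = \rank(G)$.

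The heart of the proof will be to upgrade this to the isomorphism $\pi \cong \Z^k$. The uniqueness clauses of Theorems \ref{ComplCholFac} and \ref{ModCholFac} determine $H$ in each case: for classical LU, modified LU, and modified $(m-1) \times m$ factorization, the factorization is unique on the nose, so $H$ is trivial; for complex symmetric Cholesky, $H \cong (\Z/2)^m$ consists of the diagonal $\pm 1$ matrices in $B_m$; and for skew-symmetric Cholesky (with $m = 2\ell$), $H \cong (\Z/2)^\ell$ consists of the block-diagonal matrices in $D_m$ with $2 \times 2$ diagonal blocks equal to $\pm I$. In every case $H$ is contained in the maximal torus $T$ of $G$. (Alternatively, in characteristic zero Mostow's theorem on conjugacy of maximal reductive subgroups of a connected solvable algebraic group guarantees that every finite subgroup of $G$ can be conjugated into $T$; replacing $v_0$ by a suitable translate in its orbit, we may assume $H \subset T$.)

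With $H \subset T$, the decomposition $G = U \rtimes T$, in which $U$ is the unipotent radical, shows that right multiplication by $H$ on $G \cong U \times T$ affects only the $T$-factor, so $G/H \cong U \times (T/H)$. Since $U$ is biholomorphic to some $\C^N$ (as already used in the proof of Theorem \ref{BlkKpione}) and hence contractible, $G/H$ is homotopy equivalent to $T/H$. But $T \cong (\C^*)^k$ and the quotient of a complex torus by a finite subgroup is again a complex torus of the same rank, so $T/H \cong (\C^*)^k$ is homotopy equivalent to the real $k$-torus. Consequently $\pi_1(V \backslash \cE) \cong \pi_1(G/H) \cong \Z^k$, and $V \backslash \cE$ is homotopy equivalent to a $k$-torus. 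Corollary \ref{MilFibKZm} then immediately yields that the Milnor fiber of $(\cE, 0)$ is homotopy equivalent to a $(k-1)$-torus. The final claim that the three distinguished families (symmetric matrices, modified $m \times m$, modified $(m-1) \times m$) give honest free divisors rather than merely free* divisors is exactly Theorem \ref{Choldetarr}(i).

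The main obstacle is the refinement $\pi \cong \Z^k$, which is stronger than the bare extension statement of Theorem \ref{BlkKpione}. The cleanest route verifies $H \subset T$ by the case-by-case computations above, using the uniqueness-up-to-natural-ambiguity clauses of the factorization theorems; once $H \subset T$ is in hand, the semidirect product structure reduces the topology of $V \backslash \cE$ to the elementary identification $\pi_1((\C^*)^k) = \Z^k$.
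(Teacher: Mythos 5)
Your proposal is correct, and for the part that actually requires work --- upgrading the extension of $\Z^k$ by the finite isotropy group $H$ to the isomorphism $\pi \cong \Z^k$ --- it takes a genuinely different route from the paper. The paper handles the symmetric (and, by analogy, skew-symmetric) case by an explicit path computation: it constructs loops $\ga_j$ in $V\backslash \cE$ from the diagonal paths $\gamma_j(t)$ with $j$--th entry $e^{\pi i t}$, checks that $\ga_j * \ga_j$ lifts to the $j$--th generator of $\pi_1(B_m)$, that the $\ga_j$ commute and generate the deck group, and concludes that $\pi_1(V\backslash\cE)$ is free abelian on the $\ga_j$. You instead verify (as the paper also implicitly does) that $H$ lies in the maximal torus $T$, and then use the structural decomposition $G = U \rtimes T$ to get $G/H \cong U \times (T/H)$ with $T/H$ again a rank-$k$ torus; this is clean, handles all cases uniformly, and generalizes to any block representation whose generic isotropy can be conjugated into a maximal torus (your appeal to Mostow's conjugacy theorem makes this automatic in characteristic zero, though the direct case-by-case verification suffices here). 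What the paper's hands-on argument buys in exchange is explicit loop generators and the identification of the image of $p_* : \pi_1(G) \to \pi_1(V\backslash\cE)$ as $(2\Z)^k$, which is invoked later in the proof of Corollary \ref{Cor3.2}; your lattice description $T/H = \C^k/\Lambda'$ with $2\pi i\,\Z^k \subset \Lambda'$ of index $|H|$ recovers the same information, but you would need to make that extraction explicit if the later corollary is to lean on your version of the proof. The remaining assembly --- Theorem \ref{Choldetarr} for the identification with exceptional orbit varieties and the free/free* dichotomy, Theorem \ref{BlkKpione} for the $K(\pi,1)$ property, and Corollary \ref{MilFibKZm} for the torus statements --- matches the paper exactly.
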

\begin{proof}
As stated in Theorem \ref{Choldetarr}, the corresponding determinantal 
arrangements for the complex Cholesky and modified Cholesky-type 
factorizations are the exceptional orbit varieties for the corresponding 
solvable linear algebraic groups given in Table \ref{table2.0}.  Then, Theorem 
\ref{BlkKpione} implies that the complements of the determinantal 
arrangements are $K(\pi, 1)$\rq s.  Once we have shown that in each case 
$\pi \simeq \Z^k$, where $k$ is the rank of the corresponding solvable 
group, it will follow by Corollary \ref{MilFibKZm} that the complement is 
homotopy equivalent to a $k$--torus and the Milnor fiber, to a $(k-1)$--torus.  
Furthermore, by Theorem \ref{Choldetarr} for the cases of complex 
symmetric matrices and the modified Cholesky factorizations, the 
corresponding exceptional orbit varieties are free divisors with the preceding 
properties.  
It remains to show in each case that $\pi_1(V\backslash \cE) \simeq  \Z^k$ 
where $k$ is the rank of the corresponding solvable group. 
\par
We first consider the determinantal arrangements for the complex 
Cholesky--type factorization (see Theorem \ref{ComplCholFac}).  In the case 
of $m \times m$ complex symmetric matrices, the isotropy group for the 
identity matrix $I$ is $H = (\Z/2\Z )^m$ consisting of diagonal matrices with 
entries $\pm 1$ as the diagonal entries.  We claim that the extension of 
$\Z^m$ by $H$ is again isomorphic to $\Z^m$.  
\par  
To see this, we consider for  a $1 \leq j \leq m$ a path $\gamma_j(t)$ from 
$[0, 1]$ to the Borel subgroup $B_m$ of lower triangle matrices.  It consists 
of diagonal matrices $\gamma_j(t) = B_t$ with entries $1$ in all positions 
except for  the $j$--th which is $e^{\pi i t}$.  Then $B_t\cdot B_t^T$ is 
diagonal with all diagonal entries $1$ except in the $j$--th position, where it is 
$e^{2\pi i t}$.  This is a closed path $\ga_j$ in the complement of the 
determinantal arrangement.  Thus, the corresponding path in the Borel 
subgroup $B_m$ is a lift of $\ga_j$.  Also, a lift of $\ga_j*\ga_j$ is the path 
$\gb_j$ in $B_m$ of diagonal matrices with $j$--th entry $e^{2\pi i t}$ which 
defines the $j$--th generator of $\Z^m$, the fundamental group for $B_m$, 
and hence $G$.  
\par  
Second,  the covering transformation $h_j$ of $G$ corresponding to $\ga_j$ 
is given by multiplication by the diagonal matrix $H_j$, whose 
$(i, i)$--entry is $1$ if $i\neq j$ and $-1$ if $i = j$.  These generate the group 
of covering transformations.
\par  
Third, because paths $\ga_i(t)$ and $\ga_j(t)$ with $i \neq j$ are 
in different diagonal positions, the path classes $\ga_j*\ga_i$ and 
$\ga_i*\ga_j$ are homotopic.  Hence, the classes in $\pi_1(V \backslash 
\cE, I)$ defined by  $\{ \ga_i(t) : 1 \leq i \leq m\}$ commute; they generate 
the group of covering transformations; and their squares generate $\pi_1(G, 
I)$.  Thus, they generate $\pi_1(V \backslash \cE, I)$, which is then a free 
abelian group generated by the $\ga_i$.  
\par  
Second, for general $m \times m$ complex matrices, by the uniqueness of 
the complex LU-decomposition, the isotropy group is the trivial group.  
Hence, $\pi \simeq \Z^m$.  
\par  
Third, for $m \times m$ skew-symmetric matrices with $m = 2\ell$, the 
isotropy subgroup $H$ of the matrix $J$ in (3) is $H \simeq  
(\Z/2\Z)^{\ell}$.  The generator of the $j$--th factor is given by the block 
diagonal matrix with $2 \times 2$ blocks which are the identity except for 
the $j$--th block which can be the $2 \times 2$ diagonal matrix 
$\pm I$.  An analogous argument as for the symmetric case shows that the 
extension of $\Z^{\ell}$ by $H$ is again isomorphic to $\Z^{\ell}$.  For $m = 
2\ell + 1$, a similar argument likewise shows that $H \simeq  
(\Z/2\Z)^{\ell}$ and the extension of $\Z^{\ell}$ by $H$ is again isomorphic 
to $\Z^{\ell}$. \par
Fourth, for both types of modified Cholesky-type factorization the 
factorization is unique.  Hence, in both cases the isotropy subgroups are 
trivial.  Hence, again $\pi \simeq \Z^k$, where $k$ is the rank of the 
corresponding solvable group in Table \ref{table2.0}. It is $k = 2m-1$ for the 
$m \times m$ general case  and $k = 2m -2$ for the $(m-1) \times m$ case.  
\end{proof}

\begin{Example}
\label{Exam3.1}
We illustate the preceding with the simplest examples.  We consider the 
lowest dimensional representations of each type.  The matrices in each 
space are given by
\begin{equation}
\label{Eqn3.1a}
a) \,\, \begin{pmatrix}
x  &  y \\
y  & z  
\end{pmatrix} \quad b) \,\, \begin{pmatrix}
x  &  y \\
z & w  
\end{pmatrix} \quad c)  \,\, \begin{pmatrix}
x  &  y  & z \\
u  & v  & w 
\end{pmatrix} \quad d)  \,\, \begin{pmatrix}
0 & x  &  y  & z \\
-x & 0  & u & v \\
-y & -u  & 0 & w \\
- z & -v  & -w  & 0
\end{pmatrix}
\end{equation}
\par Then, Table \ref{table3.0} lists the corresponding representation and 
the topological type of both the complement and  the Milnor fiber of the 
exceptional orbit varieties.  One point to observe is that the equations $xz - 
y^2$ on $\C^3$, $xw - yz$ on $\C^4$, and $xw - yv + zu$ on $\C^6$ define 
Morse singularities at $0$.  Their Milnor fibers are homotopy equivalent to 
respectively $S^2$, $S^3$, and $S^5$; and the complements are homotopy 
equivalent to bundles over $S^1$ with these respective fibers.  By adding a 
plane tangent to an element of each of the cones defined by the equations, 
the complements and Milnor fibers become homotopy tori.
\begin{table}
\begin{tabular}{|l|c|c|c|c|l|}
\hline
 Matrix Space & Group & Free/Free* & $\cE$  & $V\backslash \cE$  & 
Milnor\\
   &  &   &    &    &   Fiber  \\
\hline
Cholesky--type  &  &   &    &    & \\
 Factorization  &   &  &      &   &  \\
\hline
$Sym_2$ & $B_2$ & Free &  $x\,(xz - y^2)$ & $T^2$  & $S^1$\\
$M_{2, 2}$ & $B_2 \times N_2$ & Free* &  $x\,(xw - yz)$ & $T^2$  & 
$S^1$\\
$Sk_4$  & $D_4$  &  Free* &  $x\,(xw - yv + zu)$ & $T^2$  & $S^1$ \\
\hline \hline
Modified   &   &  &    &    & \\
Cholesky--type  &   &  &      &   &  \\
\hline
$M_{2, 2}$  &  $B_2 \times C_2$   &  Free  &  $xy\,(xw - yz)$ & $T^3$  & 
$T^2$  \\
$M_{2, 3}$ & $B_2 \times C_3$ & Free &  $xy\,(xv - yu)$ & $T^4$  & $T^3$ 
\\
     &   &    &  $\cdot (yw - zv)$    &   &  \\
$Sk_4$  & nonlinear  &  Free &  $xyu\, (yv - zu)$ &    &   \\
     &   &    &  $\cdot (xw - yv + zu)$    &   &  \\

\hline
\end{tabular}
\vspace*{.2cm}
\caption{\label{table3.0} Simplest examples of representations for 
(modified) Cholesky--type factorizations, with equations defining  exceptional 
orbit varieties $\cE$. Listed is the homotopy type of the complement 
$V\backslash \cE$ and the Milnor fiber of $\cE$.  Note that because the 
nonlinear group acting on $Sk_4$ is infinite dimensional, we cannot apply the 
preceding results to determine the topology of the complement nor the 
Milnor fiber.}
\end{table}
\end{Example}
\section{Generators for the Cohomology of the Milnor Fibers}
\label{Sec:4}\par
For the cases of the representations corresponding to both Cholesky-type 
and modified Cholesky-type factorizations, we will compute explicit 
generators for the cohomology algebras with complex coefficients of both 
the complement and the Milnor fiber of the exceptional orbit variety.  By  
Theorem \ref{CholFacExam}, it is enough to give a basis for $H^1(\cdot, 
\C)$ for each case.  
\par
We will use several facts concerning prehomogeneous spaces due to Sato 
and Kimura (see e. g. \cite{Ki} and \cite{SK}).  Prehomogeneous spaces are 
representations $V$ of complex algebraic Lie groups $G$ which have open 
orbits.  The exceptional orbit variety $\cE$ is again the complement of the 
open orbit.  By  Theorem 2.9 in \cite{SK}, the components of $\cE$ which 
are hypersurfaces have reduced defining equations $f_i$ which are the {\em 
basic relative invariants} (they generate the multiplicative group of all 
relative invariants).  $f_i$ is a relative invariant if there is a rational 
character $\chi_i$ of $G$ so that $f_i(g\cdot v) = \chi_i(g)\cdot f_i(v)$ for 
all $g \in G$ and $v \in V$.  The $1$--forms $\gw_i = \frac{df_i}{f_i}$ are 
defined on $V \backslash \cE$ and are the pull-backs of $\frac{dz}{z}$ via 
$f_i$.  Hence, they are closed and define 
$1$--dimensional cohomology classes in $H^1( V \backslash \cE , \C)$.  By 
pulling back the $\gw_i$ via the inclusion map of the Milnor fiber $F 
\hookrightarrow V \backslash \cE$ we obtain cohomology classes $\tilde 
\gw_i \in H^1(F, \C)$.  We have the following description of the cohomology 
algebras.  
\begin{Thm}
\label{CohomComplFibKZm} 
Let $\rho : G \to  \GL(V)$ be a (nonreduced) block representation of a 
solvable linear algebraic group $G$ whose rank is $k$.  Suppose the 
complement of the the exceptional orbit variety $\cE$  satisfies 
$\pi_1(V\backslash \cE) \simeq \Z^k$.  Then, \par
\begin{itemize}
\item[i)] there are $k$ basic relative invariants  $f_i$ and $H^1(V\backslash 
\cE, \C)$ has the basis $\gw_i$ for $i = 1, \dots k$.  Hence, the cohomology 
algebra $H^*(V\backslash \cE, \C)$ is the free exterior algebra on these 
generators;
\item[ii)] $H^1(F, \C)$ is generated by the $\{ \tilde \gw_i, i = 1, \dots , 
k\}$ with a single relation $\sum_{i = 1}^{k} \tilde \gw_i = 0$.  Hence,  
$H^*(F, \C)$ is the free exterior algebra on any subset of $k -1$ of the 
$\tilde \gw_i$.
\end{itemize}
\end{Thm}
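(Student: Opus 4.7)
The plan is to handle (i) by combining the topological identification $V\backslash\cE\simeq T^k$ coming from the hypothesis with a standard divisor-complement cohomology computation, and then to bootstrap (ii) from (i) via the global Milnor fibration $h:V\backslash\cE\to\C^*$.

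For (i), I would first apply the hypothesis $\pi_1(V\backslash\cE)\simeq\Z^k$ together with Corollary \ref{MilFibKZm} to conclude that $V\backslash\cE$ is homotopy equivalent to a $k$-torus, so $H^1(V\backslash\cE,\C)\cong\C^k$ and $H^*(V\backslash\cE,\C)$ is the free exterior algebra on $H^1$. To identify a basis I invoke Sato--Kimura (Theorem 2.9 of \cite{SK}): the basic relative invariants $f_1,\dots,f_r$ are in bijection with the irreducible hypersurface components $\cE_1,\dots,\cE_r$ of $\cE$. The standard divisor-complement calculation (long exact sequence of the pair $(V,V\backslash\cE)$ with $V$ contractible, together with the Thom isomorphism applied on the regular locus $\cE^{\mathrm{reg}}$) identifies $H^1(V\backslash\cE,\C)\cong\C^r$ with basis dual to the meridians of the components, and the Poincar\'e residue identifies these meridian duals with the logarithmic classes $[\gw_i]=[df_i/f_i]$. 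Comparing dimensions forces $r=k$ and yields the desired basis.

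For (ii), I would use the global Milnor fibration $h=\prod_{i=1}^k f_i:V\backslash\cE\to\C^*$ constructed in the proof of Theorem \ref{MilFibKpione}, whose fiber $F$ is homotopy equivalent to $T^{k-1}$ by Corollary \ref{MilFibKZm}. The Serre spectral sequence has $E_2^{p,q}=H^p(\C^*;H^q(F))$ and degenerates at $E_2$ because the base has cohomological dimension one. The Betti-number identity $\binom{k}{n}=\binom{k-1}{n}+\binom{k-1}{n-1}$ forces the monodromy to act trivially on $H^*(F,\C)$, yielding the short exact sequence
\begin{equation*}
0\to H^1(\C^*,\C)\xrightarrow{h^*}H^1(V\backslash\cE,\C)\xrightarrow{i^*}H^1(F,\C)\to 0.
\end{equation*}
Since $h^*([dz/z])=[dh/h]=\sum_i[\gw_i]$, the pullbacks $[\tilde\gw_i]=i^*[\gw_i]$ span $H^1(F,\C)$ subject to the single relation $\sum_i[\tilde\gw_i]=0$, and any $k-1$ of them form a basis. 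As $F\simeq T^{k-1}$, the cohomology algebra is the free exterior algebra on any such subset.

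The main obstacle lies in part (i), specifically in identifying the $[\gw_i]$ with a basis of $H^1$; once this is in hand the rest is essentially formal. A minor technical subtlety is that $\cE$ need not be a normal crossings divisor, so the Thom / residue identification must be carried out on the regular locus $\cE^{\mathrm{reg}}$, whose complement in $V$ differs from $V\backslash\cE$ by a subset of complex codimension at least two and hence has the same $H^1$ and $H^2$. Triviality of the monodromy of the Milnor fibration on $H^*(F,\C)$ is forced a posteriori by the torus structure but could alternatively be checked directly from the homogeneity of $h$ under the ambient $\C^*$-action.
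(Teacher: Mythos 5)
Your argument is correct, but it reaches part (i) by a genuinely different route than the paper. The paper works entirely on the group side: it uses the covering $\varphi:G\to V\backslash\cE$ to identify $H^1(V\backslash\cE,\C)$ with $H^1(G,\C)$, explicitly computes the periods $\int_{\gd_j}\gw_i=\gl_j^{(i)}$ over loops coming from one-parameter subgroups of the maximal torus, and then shows the resulting $m\times k$ matrix of infinitesimal characters is square and nonsingular by invoking Kimura's character theory for prehomogeneous spaces (multiplicative independence of the $\chi_i$ gives $m\le k$; generation of $X(T/H)$ gives $m\ge k$). You instead use the standard topological fact that $H_1$ of a hypersurface complement in $\C^n$ is freely generated by meridians of the irreducible components, the Sato--Kimura bijection between basic relative invariants and those components, and the residue computation $\frac{1}{2\pi \iti}\int_{\mu_j}\gw_i=\delta_{ij}$ (valid because the $f_i$ are reduced), forcing $r=k$ from the hypothesis $\pi_1\simeq\Z^k$. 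Your version is shorter and more topological; the paper's version buys the explicit period matrix $\gL$ in terms of the weights of the torus action and establishes $m=k$ independently of the topology of the complement. Your caveat about passing to $\cE^{\mathrm{reg}}$ is exactly the right technical point, since $\cE$ is purely one-codimensional (it is the zero set of the coefficient determinant) and its singular locus has complex codimension at least two in $V$. For part (ii) the two arguments are essentially the same in substance: both reduce to showing $\iti^*$ is surjective with one-dimensional kernel spanned by $\sum_i\gw_i$; you get the exact sequence from the degenerating Serre spectral sequence and the edge map $h^*[dz/z]=\sum_i[\gw_i]$, while the paper gets surjectivity from $\pi_1(F)\hookrightarrow\pi_1(V\backslash\cE)$ being $\Z^{k-1}\hookrightarrow\Z^k$ and the relation by differentiating $h=1$ along $F$; your a posteriori deduction of trivial monodromy from the binomial identity is valid but could be bypassed, as you note, using homogeneity of $h$.
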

\par 
Because of the explicit generators and relation for the degree $1$ 
cohomology of the Milnor fiber, we can draw the following concluson.
\begin{Corollary}  
\label{CorGauMan}
For a (nonreduced) block representation as in Theorem 
\ref{CohomComplFibKZm}, the Gauss-Manin connection for the exceptional 
orbit variety $(\cE, 0)$ is trivial.  
\end{Corollary}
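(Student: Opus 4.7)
The plan is to show that the monodromy of the Milnor fibration acts trivially on $H^*(F,\C)$; since triviality of the Gauss--Manin connection on $(\cE,0)$ is equivalent (for this global-to-local setup) to triviality of the monodromy representation on the cohomology bundle over $\C^*$, this will finish the proof.

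First I would recall the global Milnor fibration $h: V\backslash \cE \to \C^*$ constructed in the proof of Theorem \ref{MilFibKpione}, where $h = \prod_i f_i^{m_i}$ for suitable multiplicities $m_i$ (in the reduced block case, $m_i=1$). The monodromy of this fibration is realized geometrically by the $\C^*$-action: the loop $t \mapsto e^{2\pi i t} w_0$ in the base is covered, via the trivialization (\ref{aig3,1}), by the path $t \mapsto e^{2\pi i t/d}\cdot v$ in $V\backslash \cE$, where $d = \deg h$. So the geometric monodromy $T: F \to F$ is the restriction of the scalar action by a primitive $d$-th root of unity.

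Next I would use the explicit generators supplied by Theorem \ref{CohomComplFibKZm}. Each basic relative invariant $f_i$ is a homogeneous polynomial of some degree $d_i$, so for the scaling $\phi_\lambda(v) = \lambda v$ we have $f_i \circ \phi_\lambda = \lambda^{d_i} f_i$, and consequently
\[
\phi_\lambda^*\,\omega_i \;=\; \phi_\lambda^*\!\left(\frac{df_i}{f_i}\right) \;=\; \frac{d(\lambda^{d_i}f_i)}{\lambda^{d_i}f_i} \;=\; \frac{df_i}{f_i} \;=\; \omega_i
\]
on $V\backslash\cE$. In particular $T^*\tilde\omega_i = \tilde\omega_i$ in $H^1(F,\C)$ for every $i$. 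By part (ii) of Theorem \ref{CohomComplFibKZm}, the classes $\tilde\omega_i$ generate $H^*(F,\C)$ as an algebra, and $T^*$ is a ring automorphism, so $T^*$ is the identity on $H^*(F,\C)$.

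The main (minor) obstacle is simply identifying the monodromy with the $\C^*$-action cleanly; once this is done, the triviality of the connection is immediate from the fact that all cohomology generators are restrictions of \emph{globally defined} closed $1$-forms $\omega_i$ on the total space $V\backslash\cE$, and such classes are automatically flat sections of the Gauss--Manin bundle. Since the Gauss--Manin connection on $(\cE,0)$ is identified with this global connection via the equivalence of the local Milnor fibration with the fiber of $h$ (established in the proof of Theorem \ref{MilFibKpione}), it is trivial.
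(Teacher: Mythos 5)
Your proposal is correct, and its closing paragraph is in fact exactly the paper's argument: the paper observes that the $\gw_i$ are closed $1$--forms defined on all of $V\backslash \cE$, so their restrictions to the fibers of $h$ are global (hence flat) sections of the cohomology sheaf over $\C^*$; since by Theorem \ref{CohomComplFibKZm} these restrictions generate $H^*(F,\C)$, the connection is trivial, and the identification of the Milnor fibration with the global fibration $h: V\backslash\cE \to \C^*$ (from the proof of Theorem \ref{MilFibKpione}) transports this to $(\cE,0)$. What you do differently is to lead with an explicit computation of the geometric monodromy: you identify $T$ with multiplication by a primitive $d$-th root of unity via the $\C^*$-action and then check $T^*\gw_i = \gw_i$ directly from homogeneity of the $f_i$. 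That computation is correct and gives a concrete, self-contained verification that the monodromy representation is trivial (which, since $\C^*$ is a $K(\Z,1)$, is indeed equivalent to triviality of the flat connection); but it is logically redundant once you have observed that the generators extend to global closed forms on the total space. Two small points: the paper works throughout with the \emph{reduced} defining equation $h=\prod_i f_i$ even in the nonreduced block case, so the multiplicities $m_i$ you introduce are unnecessary; and your scaling identity $\phi_\lambda^*\gw_i=\gw_i$ is the same homogeneity fact that makes the paper's ``global section'' argument work, so the two routes really rest on the same mechanism.
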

\begin{proof}[Proof of Corollary \ref{CorGauMan}]
By Theorem \ref{CohomComplFibKZm}, the $\gw_i$ restrict to give global 
sections of the cohomology sheaf $\cH^1(U, \C) = H^1(h^{-1}(U), \C)$ for 
$U \subset \C^*$ (and  $h$ the reduced defining equation).  Hence, the 
Gauss--Manin connection for the fibration $h :V\backslash \cE \to \C^*$ is 
trivial for each of these elements, as it is for the single relation $\sum 
\gw_i$ .  Since their restrictions generate the cohomology of the fiber, the 
Gauss-Manin connection acts trivially on the entire cohomology.  As the 
inclusion of the Milnor fibration of the exceptional orbit variety into 
$V\backslash \cE$ is a homotopy equivalence of fibrations, the Gauss Manin 
connection is also trivial on the Milnor fiber.
\end{proof}
By Theorem \ref{CholFacExam}, this theorem applies to all of the 
representations corresponding to both Cholesky-type and modified Cholesky-
type factorizations.
\begin{Corollary}  
\label{CorCholCohGM}
For each representation $\rho : G \to GL(V)$ corresponding to a Cholesky or 
modified Cholesky type factorizations, the conclusions of Theorem 
\ref{CohomComplFibKZm} and Corollary \ref{CorGauMan} apply to the 
complement of the exceptional orbit variety $\cE$, and to the Milnor fiber 
and Gauss--Manin connection of $(\cE, 0)$.
\end{Corollary}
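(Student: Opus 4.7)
The plan is that this corollary is a direct combination of three earlier results, so the work is mostly bookkeeping rather than new argument. First I would invoke Theorem \ref{Choldetarr} to recall that each of the representations $\rho: G \to \GL(V)$ corresponding to a Cholesky or modified Cholesky type factorization (as catalogued in Table \ref{table2.0}) is either a block representation or a nonreduced block representation of a solvable linear algebraic group. This places us inside the hypothesis class of Theorem \ref{CohomComplFibKZm}.

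Second, I would appeal to Theorem \ref{CholFacExam}, which establishes precisely the missing hypothesis required by Theorem \ref{CohomComplFibKZm}, namely that $\pi_1(V\backslash \cE) \simeq \Z^k$, where $k$ is the rank of the corresponding solvable group listed in Table \ref{table2.0}. With this in hand, Theorem \ref{CohomComplFibKZm} immediately applies, so there exist $k$ basic relative invariants $f_i$ whose logarithmic derivatives $\gw_i = df_i/f_i$ give a basis of $H^1(V\backslash \cE, \C)$, and their pullbacks $\tilde\gw_i$ generate $H^1(F,\C)$ subject to the single relation $\sum \tilde\gw_i = 0$. Once this conclusion is in place, Corollary \ref{CorGauMan} applies verbatim to yield triviality of the Gauss--Manin connection of $(\cE, 0)$ both on $V\backslash \cE$ and on the Milnor fiber.

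The only point where one might pause is the identification of the basic relative invariants of Sato--Kimura with the explicit factors appearing in the coefficient determinants $\prod \det(A^{(k)})$ and $\prod \det(\hat A^{(k)})$; however, this is not needed for the corollary as stated, since Theorem \ref{CohomComplFibKZm} guarantees the existence and the correct number of such $f_i$ intrinsically from the block structure. Thus, I do not anticipate any real obstacle: the whole proof is a two-line citation, and I would write it as simply observing that Theorems \ref{Choldetarr} and \ref{CholFacExam} together verify the hypotheses of Theorem \ref{CohomComplFibKZm} and Corollary \ref{CorGauMan} for each entry of Table \ref{table2.0}, whence the conclusions transfer directly.
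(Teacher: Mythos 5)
Your proposal is correct and matches the paper exactly: the paper's own justification is the single sentence preceding the corollary, namely that Theorem \ref{CholFacExam} supplies the hypothesis $\pi_1(V\backslash \cE) \simeq \Z^k$ needed for Theorem \ref{CohomComplFibKZm} (with Theorem \ref{Choldetarr} placing these representations in the (nonreduced) block representation class), after which Corollary \ref{CorGauMan} follows. Your additional remark that the explicit identification of the Sato--Kimura basic relative invariants with the factors $\det(A^{(k)})$ is not needed for the statement as given is accurate.
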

\begin{Example}
\label{Exam.Cohom}
\par
For the representation of $B_3$ on $Sym_3$, the exceptional orbit variety 
$\cE_3^{sy}$ is defined using coordinates for a generic matrix 
$$ A \,\, = \,\, \begin{pmatrix}
x & y & z \\
y  & w & u \\
z & u & v 
\end{pmatrix}  $$ \par
\vspace{2ex} 
 by  
$$  x\,(xw -y^2)\cdot\det (A) \,\, = \,\,0 \, .$$  \par
By Theorem \ref{CohomComplFibKZm}  and Corollary \ref{CorCholCohGM}, 
the complex cohomology of the complement is the exterior algebra 
$$  H^*(Sym_3 \backslash \cE_3^{sy}; \C) \,\, \simeq \,\, \gL^* \C< 
\frac{dx}{x}, \frac{d(xw -y^2)}{(xw -y^2)}, \frac{d(\det (A))}{\det (A)} >  
$$
In addition, the complex cohomology of the Milnor fiber of $\cE_3^{sy}$ is 
isomorphic to the exterior algebra on any two of the preceding generators.  
\end{Example}
\begin{proof}[Proof of Theorem \ref{CohomComplFibKZm}]  
First, we consider $V \backslash \cE$.  For $v_0 \in V \backslash \cE$, the 
map $\varphi : G \to V \backslash \cE$ sending $g \mapsto g\cdot v_0$ is a 
regular covering space map.  Hence, the homomorphism $\varphi_* : 
\pi_1(G) \to \pi_1(V \backslash \cE)$ is injective.  By the assumption on $V 
\backslash \cE$ and the fact that $G$ is homotopy equivalent to its maximal 
torus, both are $\Z^k$, where $k$ is the rank of $G$.  Thus, by the Hurewicz 
theorem and the universal coefficient theorem, we conclude that $\varphi_* 
: H_1(G, \C) \to H_1(V \backslash \cE, \C)$ is injective, and both groups are 
isomorphic to $\C^k$; hence, $\varphi_*$ is an isomorphism.  Thus, also  
$\varphi^* :  H^1(V \backslash \cE, \C) \simeq H^1(G, \C)$.  Hence, if $\{ 
f_1, \dots , f_m\}$ denote the set of basic relative invariants, we shall show 
that $m = k$ and that $\varphi^*(\gw_i)$ for $i = 1, \dots , k$ form a set 
of generators for $H^1(G, \C)$.  
\par
Consider one $f_i$ with its corresponding character $\chi_i$.  Consider a 
one-parameter subgroup $\exp(tw)$ for $w \in \t$, the Lie algebra of a 
maximal torus $T$  of $G$.  Then, for any $v \in V \backslash \cE$,
\begin{equation}
\label{Eqn3.1}
 f_i(\exp(tw)\cdot v) \quad = \quad \chi_i(\exp(tw))\, f_i(v) \, . 
\end{equation}
Since $\exp : \t \to T$ is a Lie group homomorphism, so is $\chi_i \circ 
\exp$.  Thus, if $\{ w_1, \dots , w_k\}$ is a basis for $\t$, then $\chi_i$ 
has the following form on $T$, 
\begin{equation}
\label{Eqn3.2}
\chi_i\left(\exp(t (\sum z_{\ell}\, w_{\ell}))\right) \,\, = \,\, \exp (t (\sum 
\gl_{\ell}^{(i)}\, z_{\ell}))  \, .
\end{equation}
Then, for $w = \sum z_{\ell}\, w_{\ell}$, substituting (\ref{Eqn3.2}) into 
(\ref{Eqn3.1}), and differentiating with respect to $t$, we obtain
\begin{equation}
\label{Eqn3.3}
\pd{f_i(\exp(tw)\cdot v)}{t} \quad = \quad \pd{\exp(t (\sum 
\gl_{\ell}^{(i)}\, z_{\ell}))}{t}\, f_i(v) \, .
\end{equation}
The LHS of (\ref{Eqn3.3}) computes $df_i( \xi_w (\exp(tw)\cdot v))$, where 
$\xi_w$ is the representation vector field associated to $w$.  Thus, we 
obtain
\begin{equation}
\label{Eqn3.4}
df_i( \xi_w (\exp(tw)\cdot v))\quad = \quad (\sum \gl_{\ell}^{(i)}\, z_{\ell}) 
\cdot \exp(t (\sum \gl_{\ell}^{(i)}\, z_{\ell}))\, f_i(v)
\end{equation}
or  (\ref{Eqn3.4}) can be rewritten 
\begin{equation}
\label{Eqn3.5}
\frac{1}{f_i}\cdot df_i( \xi_w) (\exp(tw)\cdot v) \quad = \quad \sum 
\gl_{\ell}^{(i)}\, z_{\ell} \, .
\end{equation}
Hence, 
\begin{equation}
\label{Eqn3.6}
\gw_i(\xi_w) (\exp(tw)\cdot v)) \quad = \quad \sum_{\ell = 1}^{k} 
\gl_{\ell}^{(i)}\, z_{\ell} \, .
\end{equation}
\par
By the Lie--Kolchin theorem, we may suppose that $G$ is a subgroup of a 
Borel subgroup $B_r$ of some $GL_r(\C)$, and the maximal torus $T$ is a 
subgroup of the torus $T^r = (\C^*)^r$.   Thus, we may choose our 
generators $w_j = 2\pi \iti\, u_j$ with $u_j \in \C^r$ so that the $\gg_j(t) = 
\exp(t\, w_j) = \exp (2\pi \iti\, t \, u_j)$, $0 \leq t \leq 1$, each 
parametrizes an $S^1 \subset T$; and the corresponding set of fundamental 
classes for  $j = 1, \dots , k = \rank(G)$,  generate $H_1(T, \Z)$.  Since $T 
\hookrightarrow G$ is a homotopy equivalence, they also generate $H_1(G, 
\Z)$.  Furthermore, their images in $H_1(G, \C)$ form a set of generators 
which are mapped by $\varphi_*$  to a set of generators for 
$H_1(V\backslash \cE, \C)$.  These are defined by $\gd_i(t) = \gg_i(t)\cdot 
v_0$.  \par
Next, we evaluate $\gw_j$ on them.  
\begin{equation}
\label{Eqn3.7}
\int_{\gd_j} \, \gw_i  \,\, =  \,\,  \int_{0}^{1} 
\gw_i(\gd_j^{\prime})(\gg_j(t)\cdot v_0) \, dt \,\, =  \,\, \int_{0}^{1} 
\gw_i(\xi_{w_j})(\exp (t\, w_j)\cdot v_0) \, dt \, .
\end{equation}
Applying (\ref{Eqn3.6}), keeping in mind that for $w_j$, $z_{\ell} = 0$ for 
$\ell \neq j$, we obtain
$$  \int_{0}^{1} \sum \gl_{\ell}^{(i)}\, z_{\ell} \, dt   \,\, = \,\,   \gl_j^{(i)} 
\, . $$
Hence,
\begin{equation}
\label{Eqn3.8}
\int_{\gd_j} \, \gw_i  \,\, =  \,\,  \gl_j^{(i)} \, .
\end{equation}
As we vary over the set of basic relative invariants $\{f_1, \dots , f_m\}$, 
we obtain an $m \times k$ matrix $\gL = (\gl_j^{(i)})$  which  by 
(\ref{Eqn3.2}) yields for the characters $\{ \chi_i \circ \exp : i = 1, \dots , 
m\}$, a representation of the set of corresponding infinitesimal characters 
on $\t$ with respect to the dual basis for $\{ w_1, \dots , w_k\}$.  
\par  

First, by the theory of prehomogeneous vector spaces, \cite[Theorem 
2.9]{Ki}, the set of characters for the basic relative invariants are 
multiplicatively independent in the character group $X(G) \simeq X(G/[G, G]) 
\simeq X(T)$, for $T$ a maximal torus.  This is a free abelian group of rank 
$k = \rank(G) = \rank(T)$.  Hence, $m \leq k$.  
\par  
Second, by \cite[Proposition 2.12]{Ki}, the characters $\{ \chi_i : i = 1, 
\dots , m\}$ generate $X(G_1) \simeq X(T/H)$, where in our case, $G_1$ is 
the quotient of $G$ by the group generated by the unipotent radical $N$ of 
$G$ and the isotropy subgroup of an element $v_0$ in the open orbit.  Here 
$H$ denotes the image of the isotropy subgroup in $G/N \simeq T$.  As a 
consequence of $G$ being solvable, there is a torus $T$ in $G$ so 
composition with projection onto $G/N$ is an isomorphism. Hence, via this 
isomorphism, we may assume $H \subset T$.  As $H$ is finite, $T/H$ is a 
torus of the same dimension and the map $T \to T/H$ induces an 
isomorphism on the corresponding Lie algebras.  Thus, $\{ \chi_i : i = 1, 
\dots , m\}$ generate $X(T/H)$, an abelian group of rank $k$, so $m \geq k$.  
\par
Hence, $m = k$ and the $\{ \chi_i\} $ are algebraically independent in 
$X(T/H)$, which implies the corresponding infinitesimal characters on $\t$ 
are linearly independent.  This is equivalent to $\gL$ being nonsingular.  \par
Hence, by (\ref{Eqn3.8}), we conclude that the $\{ \gw_i\}$ form a set of 
generators for $H^1(V\backslash \cE, \C)$.  \par
Lastly, it remains to show that if $F$ is the Milnor fiber of $(\cE, 0)$, then 
the $\{ \tilde \gw_i\}$ form a spanning set for $H^1(F, \C)$ with single 
relation $\sum_{i = 1}^{k} \tilde \gw_i  = 0$.   By our earlier arguments, if 
$h$ is a reduced defining equation for $\cE$, we may use $F = h^{-1}(1)$.  By 
assumption if $f_i$, $i = 1, \dots , k$ are the basic relative invariants, then 
$h = \prod_{i =1}^{k} f_i$ is a reduced defining equation for $\cE$.  
\par  
We let $\iti : F \hookrightarrow V \backslash \cE$ denote the inclusion, so 
$\tilde \gw_i = \iti^*(\gw_i)$.  As $\iti_* : \pi_1(F) \to \pi_1(V \backslash 
\cE)$ is the inclusion $\Z^{k-1} \hookrightarrow \Z^{k}$ where $k = $ rank 
of $G$, by the Hurewicz theorem and universal coefficient theorem, $\iti^* : 
H^1(V \backslash \cE, \C) \to H^1(F, \C)$ is a surjective map $\C^{k} \to 
\C^{k-1}$.  We need only identify the one-dimensional kernel.  Since $h = 
\prod_{i =1}^{k} f_i$ and $F$ is defined by $h = 1$, we can differentiate the 
equation on $F$ to obtain
\begin{equation}
\label{Eqn3.9}
\sum_{i = 1}^{k} \left. \frac{df_i}{f_i}\right|_{F} \quad = \quad 0 \, ,
\end{equation} 
i.e.  
$$\sum_{i = 1}^{k} \tilde \gw_i\quad = \quad\sum_{i = 1}^{k} 
\iti^*\gw_i\quad = \quad 0 \, . $$
As this is a one-dimensional subspace of $H^1(V \backslash \cE, \C)$ in the 
kernel of $\iti^*$, it must span the entire kernel as claimed.  Since we know 
$F$ is homotopy equivalent to a $(k-1)$--torus, $H^1(F, \C)$ is an exterior 
algebra on $k-1$ generators and these may be chosen to be any $k-1$ of the 
$\{ \tilde \gw_i\}$.
\end{proof}
\section{ Cholesky--Type Factorizations for Parametrized Families}
\label{Sec:5}
\par
We point out a simple consequence of the theorems for the question of 
when, for a continuous or smooth family of complex matrices, 
(modified) Cholesky--type factorization can be continuously or smoothly 
applied to the family of matrices.  To consider all cases together, we view 
(modified) Cholesky--type factorization as giving a factorization $A = B\cdot 
K\cdot C$, for appropriate $K$, $B$ and $C$, with possible relations between 
$B$ and $C$.  For example, for complex symmetric matrices, $K = I$, and 
$B$ is lower triangular with $C = B^T$.  \par 
In each case, for a continuous or smooth family $A_s, s \in X$, we seek 
continuous (or smooth) families $B_s$ and $C_s$ so that $A_s = B_s\cdot 
K\cdot C_s$ for all $s \in X$.  While it may be possible for each individual 
$A_s$ to have a (modified) Cholesky--type factorization, it may not be 
possible to do so in a continuous or smooth manner.  \par
\subsection*{Parametrized Families of Real Matrices}  \hfill 
\par
First for all three real cases of Cholesky-type factorization we have a unique 
representation.  Hence, the orbit map in the real case is a diffeomorphism, 
so we may obtain a continuous or smooth factorization for the family by 
composing with the inverse.  
\par 
However, in this case the open orbit has a much simpler structure.  The real 
solvable groups are not connected, so the open orbits are a union of 
connected components, each of which is diffeomorphic to the connected 
component of the group.  The groups have connected components which have 
as a maximal torus a \lq\lq split torus\rq\rq which is isomorphic to 
$(\R_+)^k \simeq \R^k$, for appropriate $k$.  As the connected component 
is again an extension of this torus by a real nilpotent group, which is again 
Euclidean, we conclude that the connected components are diffeomorphic to 
a Euclidean space, and hence contractible, and the orbit map is a 
diffeomorphism on each component.  
\par  
 Hence, in addition to the continuity or smoothness of factorizations in 
families more is true.  If $(X, Y)$ is a CW--pair and there is a continuous or 
smooth family $A_s, s \in Y$ with Cholesky-type factorization for a given 
type, the $A_s$ can be extended to a continuous or smooth family on $X$ 
which still has continuous, respectively smooth  Cholesky factorization of 
the same type.  

\subsection*{Parametrized Families of Complex Matrices}  \hfill 
\par
By contrast with the real case, as a result of the structure of the 
complement to the exceptional orbit varieties for both Cholesky and modified 
Cholesky--type factorizations, the answer is different.  
\par
First, in the case of general $m \times m$ or $(m-1) \times m$ matrices, 
the LU or modified LU-factorizations are unique.  Hence, the orbit maps $G 
\to V \backslash \cE$ are diffeomorphisms.  Hence, families can be 
continuously or smoothly factored.  However, for both symmetric and skew-
symmetric matrices, there is finite isotropy so the orbit map $G \to V 
\backslash \cE$ is a covering space and the continuous or smooth 
factorization involves lifting a map into $V \backslash \cE$ up to $G$.  There 
are well-known criteria for such a lifting from covering space theory.  We 
show that these conditions can be restated in terms of the cohomology 
classes $\gw_i$ (for each case), so that they define obstructions to such a 
lifting for either symmetric or skew--symmetric Cholesky--type 
factorization.  
\par
First we can use the $\gw_j$ to define integral cohomology classes.  If $\gg$ 
is a smooth closed loop in $V \backslash \cE$, then since $\gw_j = 
f_j^*(\frac{dz}{z})$
$$  \int_{\gg} \, \gw_j \,\, = \,\,  \int_{f_j \circ \gg} \, \frac{dz}{z} \,\, = 
\,\,  2\pi\, \iti\, n  $$
where $n$ is the winding number of $f_j \circ \gg$ about $0$.  Thus, the 
integral of $\frac{1}{2\pi\, \iti}\, \gw_j$ over any smooth closed loop in $V 
\backslash \cE$ is an integer, which implies that it defines an integral 
cohomology class in $H^1(V \backslash \cE, \Z)$.  
In either the symmetric or skew--symmetric case, we can define a 
homomorphism 
\begin{align}
\label{obstrmap}
\bgw : H_1(V \backslash \cE, \Z) &\to  \Z^k \\
u &\mapsto (<\frac{1}{2\pi\, \iti}\, \gw_1, u>, \dots , <\frac{1}{2\pi\, 
\iti}\, \gw_k, u>)  \, . \notag
\end{align}
where $< \cdot , \cdot >$ is the Kronecker product.  Then, we let $\bgw_2$ 
be the composition of $\bgw$ with the projection $\Z^k \to (\Z/2\Z)^k$.
  As a consequence of the results in the preceding sections, we have the 
following corollary.
\begin{Corollary}
\label{Cor3.2}
Suppose $X$ is a locally path--connected space and that $\varphi : X \to M$ 
defines a continuous, respectively smooth (with $X$ a manifold), mapping to 
a space of matrices so that for each $s \in X$, $A_s = \varphi(s)$ has a 
(modified) Cholesky--type factorization for any fixed one of the types 
considered in \S \ref{S:sec1}.  If we are either 
\begin{enumerate}
\item  in the case of general matrices with either Cholesky or modified 
Cholesky--type factorization; or
\item in the symmetric or skew--symmetric cases, and the obstruction 
$\bgw_2 \circ \varphi_* = 0\, ,$ 
\end{enumerate}
then there is a continuous, respectively smooth, (modified) 
Cholesky--type factorization $A_s = B_s\cdot K\cdot C_s$ defined for all $s 
\in X$.
\end{Corollary}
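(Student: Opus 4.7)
The plan is to reformulate the existence of a continuous (respectively smooth) factorization $A_s = B_s \cdot K \cdot C_s$ as a lifting problem for $\varphi : X \to V \setminus \cE$ across the orbit map $p : G \to V \setminus \cE$, $g \mapsto g \cdot v_0$. Since each $A_s$ admits a (modified) Cholesky--type factorization, $\varphi$ actually takes values in $V \setminus \cE$, and a continuous (resp.\ smooth) family of factorizations is precisely a continuous (resp.\ smooth) lift $\tilde\varphi : X \to G$. In case (1) --- general square or $(m-1) \times m$ matrices --- the factorization is unique, the isotropy is trivial, and $p$ is a diffeomorphism; the lift is obtained by postcomposition with $p^{-1}$, which is continuous or smooth as appropriate.

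In case (2) --- complex symmetric or skew--symmetric matrices --- the proof of Theorem \ref{CholFacExam} shows that $p$ is a finite regular covering with deck group $H \simeq (\Z/2\Z)^k$. Since $X$ is locally path--connected, the standard covering space lifting criterion yields a continuous lift of $\varphi$ (based at any chosen basepoint) if and only if $\varphi_*(\pi_1(X, s_0)) \subset p_*(\pi_1(G, g_0))$; smoothness of the lift is automatic because $p$ is a local diffeomorphism. Thus the corollary reduces to showing that this subgroup condition is equivalent to the vanishing of $\bgw_2 \circ \varphi_*$.

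To establish this equivalence I would argue that $\ker(\bgw_2) = p_*(\pi_1(G))$. Using the generators $\{[\ga_j]\}$ of $\pi_1(V \setminus \cE) \simeq \Z^k$ constructed in the proof of Theorem \ref{CholFacExam}, one has $p_*(\pi_1(G)) = \langle 2[\ga_j]\rangle$, since the lift $\gb_j$ of $\ga_j*\ga_j$ already generates the $j$--th $\Z$--factor of $\pi_1(G)$. A direct period computation, of the type carried out in the proof of Theorem \ref{CohomComplFibKZm}, shows that the integer matrix $\bigl(\tfrac{1}{2\pi\iti}\int_{\ga_j}\gw_i\bigr)$ is unimodular: in the symmetric case the path $\ga_j$ traces a diagonal matrix $A_t$ whose only nontrivial diagonal entry is $e^{2\pi\iti t}$ in the $j$--th slot, so $\tfrac{1}{2\pi\iti}\int_{\ga_j} d\log\det(A^{(k)}) = 1$ for $k \geq j$ and $0$ otherwise, yielding a unipotent upper triangular matrix; the skew--symmetric case is analogous with the even--indexed blocks. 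Hence $\bgw$ is an integral isomorphism $\pi_1(V \setminus \cE) \xrightarrow{\sim} \Z^k$, the reduction $\bgw_2$ is an isomorphism $(\Z/2\Z)^k \xrightarrow{\sim} (\Z/2\Z)^k$ whose kernel on $\pi_1(V\setminus \cE)$ is exactly $(2\Z)^k = p_*(\pi_1(G))$, and the stated obstruction condition is equivalent to the covering--theoretic lifting condition.

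The main obstacle is the last step: verifying that the cohomological obstruction $\bgw_2$ captures the lifting condition on the nose. This rests on two facts, both of which I would deduce from the proofs in \S \ref{S:sec3} and \S \ref{Sec:4}: first, that the explicit paths $\ga_j$ give a $\Z$--basis of $\pi_1(V \setminus \cE)$ whose doubles generate $p_*(\pi_1(G))$; and second, that the matrix of periods of the $\gw_i$ against these paths is unimodular --- equivalently, that the basic relative invariants generate the character lattice $X(T/H)$, which is precisely the content of the prehomogeneous vector space results (Proposition 2.12 of \cite{Ki}) invoked in Theorem \ref{CohomComplFibKZm}.
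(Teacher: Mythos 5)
Your proposal is correct and follows essentially the same route as the paper: reduce to the covering-space lifting criterion for $p : G \to V \backslash \cE$ (trivial in case (1) since $p$ is a diffeomorphism), use the fact from the proof of Theorem \ref{CholFacExam} that $p_*(\pi_1(G))$ has image $(2\Z)^k$ in $\pi_1(V\backslash\cE) \simeq \Z^k$, and pass to $H_1$ via Hurewicz to restate the condition as $\bgw_2 \circ \varphi_* = 0$. The one place you go beyond the paper is in actually verifying, via the unimodular (triangular) period matrix $\bigl(\tfrac{1}{2\pi\iti}\int_{\ga_j}\gw_i\bigr)$, that $\ker \bgw_2$ coincides with $p_*(\pi_1(G))$ --- the paper asserts this equivalence without computation, so your added detail is a genuine (and correct) improvement rather than a divergence.
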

\begin{proof}
By an earlier remark, the conclusion already follows for (1).  It is enough to 
consider case (2).  \par
We may denote the group of matrices acting on $M$ by $G$, and let $\cU$ 
denote the open orbit, which is complement of the exceptional orbit variety 
$\cE$.  Then, by assumption, $\varphi : X \to \cU$.  By composing $\varphi$ 
with an element of $G$, we may suppose $\varphi(s_0) = K$.  Also, we may 
consider each path component of $X$ separately, so we may as well assume 
$X$ is path connected.  \par
By Theorem \ref{CholFacExam} and the proof of Theorem \ref{BlkKpione}, 
for each type of Cholesky--type factorization, the map $p: G \to \cU$ 
sending $g \mapsto g\cdot v_0$ is a smooth finite covering space (where we 
let $v_0 = K$).  Furthermore, by the proof of Theorem \ref{CholFacExam} 
for the symmetric or skew--symmetric cases, $p_* : \pi_1(G, 1) \to 
\pi_1(\cU, v_0)$  is the inclusion $\Z^k \hookrightarrow \Z^k$ with image 
$(2\Z)^k$.  Since $X$ is locally path-connected and 
path-connected, by covering space theory, there is a lift of $\varphi$ to 
$\tilde \varphi : X \to G$ (smooth if $\varphi$ is smooth) with $\tilde 
\varphi(s_0) = 1$, if and only if $\varphi_*(\pi_1(X, s_0)) \subset 
p_*(\pi_1(G, 1))$.  As $\pi_1(\cU, v_0) \simeq \Z^k$ is abelian, by the 
Hurewicz theorem, this is equivalent to $\varphi_*(H_1(X, s_0)) \subset 
(2\Z)^k$.  However, this holds exactly when $\bgw_2 \circ \varphi_* = 0$ 
\par
Then, by the definition of the covering map $p$, the lift gives the continuous, 
resp. smooth, Cholesky-type factorization for all $s \in X$.
\end{proof}
\begin{Remark}
The obstruction in Corollary \ref{Cor3.2}, will always vanish if e.g. $H_1(X, 
\Z)$ is a torsion group. \par
 From the structure of the complement $V \backslash \cE$ being homotopy 
equivalent to a torus for any of the cases of Cholesky or modified Cholesky--
type factorization, we can also give a sufficient condition for the extension 
problem.  For a CW-pair $(X, Y)$, a sufficient condition for the extension of a 
continuous or smooth family $A_s$ on $Y$, which has a continuous or 
smooth Cholesky factorization of given type, to a continuous or smooth 
family on $X$ having the Cholesky factorization of the same type is that $(X, 
Y)$ is $1$--connected.
\end{Remark}
 
\end{document}